\documentclass[12pt]{amsart}
\usepackage{amssymb}
\usepackage[dvips]{color}
\usepackage{graphicx}
\usepackage{epsfig}
\usepackage[all]{xy}
\usepackage{color}
\textwidth15.8cm
\headheight12pt
\oddsidemargin.4cm
\evensidemargin.4cm

\newtheorem{theorem}{Theorem}[section]
\newtheorem{lemma}[theorem]{Lemma}

\newtheorem{corollary}[theorem]{Corollary}

\theoremstyle{definition}
\newtheorem{example}[theorem]{Example}

\theoremstyle{remark}

\newtheorem{remark}[theorem]{Remark}

\def\X{\,\,\lower2pt\hbox{\input{figX.pstex_t}}}
\def\noXv{\,\,\lower2pt\hbox{
\begin{picture}(0,0)%
\includegraphics{figNoX.pstex}%
\end{picture}%
\setlength{\unitlength}{1973sp}%
\begingroup\makeatletter\ifx\SetFigFont\undefined%
\gdef\SetFigFont#1#2#3#4#5{%
  \reset@font\fontsize{#1}{#2pt}%
  \fontfamily{#3}\fontseries{#4}\fontshape{#5}%
  \selectfont}%
\fi\endgroup%
\begin{picture}(316,316)(293,-969)
\end{picture}
}}
\def\noXh{\,\,\lower2pt\hbox{\input{figNoX2.pstex_t}}}
\def\noXDU{\,\,\lower2pt\hbox{\input{figDU.pstex_t}}}
\def\noXDD{\,\,\lower2pt\hbox{\input{figDD.pstex_t}}}
\def\noXUD{\,\,\lower2pt\hbox{\input{figUD.pstex_t}}}
\def\noXUU{\,\,\lower2pt\hbox{\input{figUU.pstex_t}}}
\def\noXRR{\,\,\lower2pt\hbox{\input{figRR.pstex_t}}}
\def\noXRL{\,\,\lower2pt\hbox{\input{figRL.pstex_t}}}
\def\noXLR{\,\,\lower2pt\hbox{\input{figLR.pstex_t}}}
\def\noXLL{\,\,\lower2pt\hbox{\input{figLL.pstex_t}}}
\def\XRR{\,\,\lower2pt\hbox{\input{figXRR.pstex_t}}}
\def\XRL{\,\,\lower2pt\hbox{\input{figXRL.pstex_t}}}
\def\XLR{\,\,\lower2pt\hbox{\input{figXLR.pstex_t}}}
\def\XLL{\,\,\lower2pt\hbox{\input{figXLL.pstex_t}}}

\newlength{\cellsz}
\newcounter{cellsize}
\newcommand{\setcellsize}[1]{%
  \setcounter{cellsize}{#1}%
  \setlength{\cellsz}{\value{cellsize}\unitlength}}%
\setcellsize{13}%
\newcommand\cellify[1]{\def\thearg{#1}\def\nothing{}%
\ifx\thearg\nothing \vrule width0pt height\cellsz depth0pt\else
\hbox to 0pt{{\begin{picture}(\value{cellsize},\value{cellsize})
  \put(0,0){\line(1,0){\value{cellsize}}}
  \put(0,0){\line(0,1){\value{cellsize}}}
  \put(\value{cellsize},0){\line(0,1){\value{cellsize}}}
  \put(0,\value{cellsize}){\line(1,0){\value{cellsize}}} \end{picture} \hss}}\fi%
\vbox to \cellsz{ \vss \hbox to \cellsz{\hss$#1$\hss} \vss}}
\newcommand\tableau[1]{\vcenter{\vbox{\let\\\cr
\baselineskip -16000pt \lineskiplimit 16000pt \lineskip 0pt
\ialign{&\cellify{##}\cr#1\crcr}}}}
\newcommand\tabl[1]{\vtop{\let\\\cr
\baselineskip -16000pt \lineskiplimit 16000pt \lineskip 0pt
\ialign{&\cellify{##}\cr#1\crcr}}}

\def\Imm{\mathrm{Imm}}

\begin{document}
 
\title{$A_2$-web immanants}
\author{Pavlo Pylyavskyy}

\address{Department of Mathematics, University of Michigan, Ann Arbor, MI, 48103}
\email{pavlo@umich.edu}

\begin{abstract}
We describe the rank $3$ Temperley-Lieb-Martin algebras in terms of Kuperberg's $A_2$-webs. We define consistent labelings of webs, and use them to describe the coefficients of decompositions into irreducible webs. We introduce web immanants, inspired by Temperley-Lieb immanants of Rhoades and Skandera. We show that web immanants are positive when evaluated on totally positive matrices, and describe some further properties.
\end{abstract}

\maketitle

\section{Introduction}

Temperley-Lieb algebras are quotients of Hecke algebras such that only the irreducible representations corresponding to Young shapes with at most two columns survive. Originally introduced in \cite{TL} for the study of percolation, Temperley-Lieb algebras appeared in many other contexts. In particular, Rhoades and Skandera in \cite{RS1} used them to introduce Temperley-Lieb immanants, which are functions on matrices possessing certain positivity properties. In \cite{LPP} those immanants and their further properties developed in \cite{RS2}, were used to resolve some Schur-positivity conjectures. In \cite{LP} Temperley-Lieb pfaffinants were introduced, which can be viewed as super- analogs of Temperley-Lieb immanants. 

In this work we generalize in a different direction. Namely, we make use of multi-column generalizations of Temperley-Lieb algebras. Those are the Temperley-Lieb-Martin algebras (or TLM algebras) introduced by Martin in \cite{M}. Their irreducible representations correspond to Young shapes with at most $k$ columns.

In \cite{BK} Brzezi\'nski and Katriel gave a description of TLM algebras in terms of generators and relations. However, in order to deal with the combinatorics of TLM algebras one desires more than that: it is natural to ask whether a diagrammatic calculus exists for TLM algebras similar to that of Kauffman diagrams for Temperley-Lieb algebras. It appears that the $A_2$ spiders of Kuperberg \cite{K} essentially provide such calculus for $k=3$.

The paper proceeds as follows. In Section \ref{sec:web} we review the presentation of TLM algebras obtained in \cite{BK}. We proceed to define a diagrammatic calculus for TLM algebras using the spider reduction rules of \cite{K}. This allows us to introduce the web bases of TLM algebras. In Section \ref{sec:cons} we introduce the tool of consistent labelings of webs, which allows us to describe the coefficients involved in the decomposition of reducible webs into irreducible ones. In Section \ref{sec:imm} we introduce web immanants. We show that web immanants are positive when evaluated on totally positive networks. In Section \ref{sec:min} we give a positive combinatorial formula for decomposing products of triples of complementary minors into web immanants. In Section \ref{sec:tl} we relate web immanants and Temperley-Lieb immanants. In Section \ref{sec:net} we use the setting of weighted planar networks to give an interpretation of web immanants, thus providing a generalization of the Lindstr\"om lemma. In Section \ref{sec:con} we discuss potential further directions. 

The author would like to express gratitude to the following people: Thomas Lam for encouragement and helpful comments, Bruce Westbury for insightful comments on a draft, Richard Stanley for pointing out the bijection in the proof of Theorem \ref{thm:eta}, Mark Skandera and Greg Kuperberg for feedback on a draft, T. Kyle Petersen for his generous help with proofreading. 

\section{Web bases of TLM algebras} \label{sec:web}

The {\it {Hecke algebra}} $H_n(q)$ is a free associative algebra over $\mathbb C[q]$ generated by elements $g_1, \ldots, g_{n-1}$ subject to the following relations: $$g_i^2 = (q-1) g_i +q;$$ $$g_i g_{i+1} g_i = g_{i+1} g_i g_{i+1};$$ and $$g_i g_j = g_j g_i$$ if $|i-j|>1$. For a permutation $w \in S_n$ and a reduced decomposition $w = \prod s_{i_j}$ the element $g_w = \prod g_{i_j}$ does not depend on the choice of reduced decomposition. As $w$ runs over all permutations in $S_n$, elements $g_w$ form a linear basis for $H_n(q)$. Note that for $q=1$ the Hecke algebra is the group algebra $\mathbb C S_n$ of the symmetric group.

The {\it {Temperley-Lieb algebra}} $TL_n(q^{1/2}+q^{-1/2})$ is a $\mathbb C[q^{1/2}+q^{-1/2}]$-algebra generated by $e_1, \ldots, e_n$ with relations $$e_i^2=(q^{1/2} + q^{-1/2}) e_i,$$ $$e_ie_{i+1}e_i = e_{i} e_{i-1} e_{i} = e_i,$$ and $$e_i e_j = e_j e_i$$ for $|i-j|>1$. Temperley-Lieb algebras are quotients of the Hecke algebras in which only the irreducible modules corresponding to shapes with at most two columns survive. The map $\theta_2: g_i \mapsto q^{1/2}e_i - 1$ gives an algebra homomorphism. 

{\it {Temperley-Lieb-Martin algebras}} are quotients of the Hecke algebra such that only the representations with at most $k$ columns survive. Thus, for $k=2$ those are exactly the Temperley-Lieb algebras. In \cite{BK} the following presentation for a Temperley-Lieb-Martin algebra $TLM_n^k$ was given. Denote $$[k]_q = \frac{q^{k/2}-q^{-k/2}}{q^{1/2}-q^{-1/2}}.$$ For $i = 1, \ldots, k$ and $j = 1, \ldots, n-i$ we have generator $e_j^{(i)}$ subject to 
$$(e_j^{(i)})^2 = [i+1]_q e_j^{(i)},$$ 
$$e_j^{(i+1)} = \frac{1}{[i]_q [i+1]_q} (e_j^{(i)} e_{j+1}^{(i)} e_j^{(i)} - e_j^{(i)}) = \frac{1}{[i]_q [i+1]_q} (e_{j+1}^{(i)} e_{j}^{(i)} e_{j+1}^{(i)} - e_{j+1}^{(i)}),$$ 
and $$e_j^{(k)}=0.$$
An algebra homomorphism $\theta_k: H_n(q) \longrightarrow TLM_n^k(q^{1/2}+q^{-1/2})$ is given by $\theta_k(g_i) = q^{1/2}e_i^{(1)} - 1$.

The generators of $TL_n$ can be represented by {\it {Kauffman diagrams}}. Each diagram is a matching on $2n$ vertices arranged on opposite sides of a rectangle: $n$ on the left and $n$ on the right. Each $e_i$ is represented by a single uncrossing \noXv between the $i$-th and $i+1$-st elements. The product is given by concatenation, with loops being erased while contributing a factor of $q^{1/2} + q^{-1/2}$. It is known that if $w$ is a $(3,2,1)$-avoiding permutation and $w = \prod s_{i_k}$ is a reduced decomposition, then $e_w = \prod e_{i_k}$ does not depend on the choice of reduced decomposition. As $w$ runs over the set of $(3,2,1)$-avoiding permutations the $e_w$ form a basis for $TL_n$.

A natural question is whether there exists similar planar diagram presentation for TLM algebras. Such presentation for $k=3$ is implicit in \cite{K}. Namely, Kuperberg considered planar diagrams, or $A_2$-{\it {webs}}, with each inner vertex of degree $3$ and each boundary vertex of degree $1$. In addition, an orientation on edges of the web is given that makes every vertex either a source or a sink. The following {\it {spider reduction rules}} were introduced in \cite{K}: 

\begin{figure}[h!]
\begin{center}
\input{sp6.pstex_t}
\end{center}
\caption{}\label{fig:sp6}
\end{figure}

Unless specified otherwise the word web will refer to $A_2$-web in what follows.  It is known that every non-reduced web can be uniquely reduced to a linear combination of reduced webs using the above rules, see for example \cite[Corollary 5.1]{SW}.

\begin{figure}[h!]
\begin{center}
\input{sp1.pstex_t}
\end{center}
\caption{}\label{fig:sp1}
\end{figure}

\begin{remark}
Note that the webs introduced in \cite{K} have arbitrary boundary conditions, while we restrict our attention to the webs having $n$ sources on the left and $n$ sinks on the right. Note also that the rules in \cite{K} actually differ by sign. For a reason to be evident later we prefer the positive version. 
\end{remark}

Let $W_n$ be an algebra generated by the diagrams on Figure \ref{fig:sp1} with spider reduction rules. Let $\eta: TLM_n^3 \longrightarrow W_n$ be given by mapping $e_i^{(1)}$ into the first type of diagrams, and the elements $[2]_q e_i^{(2)}$ into the second kind of diagrams (note the coefficient).

\begin{theorem} \label{thm:eta}
Map $\eta$ is an algebra isomorphism.
\end{theorem}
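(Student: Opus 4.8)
The plan is to check that $\eta$ respects the defining relations of $TLM_n^3$, observe that it is surjective, and then force injectivity by matching dimensions.

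First I would verify well-definedness. Since $TLM_n^3$ is presented by generators and relations, it is enough to confirm that the diagram images satisfy each relation, reading off the three spider moves of Figure~\ref{fig:sp6} (a closed loop evaluates to $[3]_q$, a digon to $[2]_q$ times an edge, and a square to a sum of two resolutions). Any relation among generators supported on disjoint sets of strands is clear, since the corresponding diagrams occupy disjoint regions. Stacking two copies of the first diagram produces a digon, so the digon move yields $(e_j^{(1)})^2 = [2]_q e_j^{(1)}$. The defining relation $[2]_q e_j^{(2)} = e_j^{(1)} e_{j+1}^{(1)} e_j^{(1)} - e_j^{(1)}$ (the case $i=1$, using $[1]_q=1$) is precisely the square move applied to the product of three first-type diagrams, one resolution giving the second-type diagram and the other collapsing by a digon to a single first-type diagram; this is exactly why the factor $[2]_q$ was built into $\eta$. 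The two remaining relations, $(e_j^{(2)})^2 = [3]_q e_j^{(2)}$ and $e_j^{(3)}=0$ (equivalently $e_j^{(2)} e_{j+1}^{(2)} e_j^{(2)} = e_j^{(2)}$ after clearing denominators), each reduce to a short computation in which stacking the relevant second-type diagrams creates closed loops and digons producing exactly the factors $[3]_q$ and the powers of $[2]_q$ demanded by the relations.

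Surjectivity is then immediate, since the image of $\eta$ contains both families of generating diagrams of Figure~\ref{fig:sp1}, hence all of the subalgebra $W_n$ they generate. For injectivity I would compare dimensions. By confluence of the reduction rules (see \cite[Corollary 5.1]{SW} and \cite{K}) the reduced webs with $n$ sources on the left and $n$ sinks on the right form a basis of $W_n$, so $\dim W_n$ equals their number; on the algebraic side $TLM_n^3$ is semisimple with irreducibles indexed by partitions of $n$ having at most three columns, whence $\dim TLM_n^3 = \sum_\lambda (f^\lambda)^2$ over such $\lambda$, which by RSK is the number of permutations of $[n]$ with no increasing subsequence of length four. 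Because $\eta$ is surjective it suffices to prove $\dim W_n \ge \dim TLM_n^3$, and the cleanest route is a bijection between reduced webs with this boundary and pairs $(P,Q)$ of standard Young tableaux of a common at-most-three-column shape (equivalently, via RSK, the permutations just described), obtainable from the planar growth rules for $A_2$ webs. Equal dimensions together with surjectivity then make $\eta$ an isomorphism.

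The homomorphism check is routine diagram chasing. The real difficulty is the bijection underlying the dimension count: one must read a pair of at-most-three-column tableaux off the nested planar structure of a reduced web and verify the correspondence is bijective. I expect this to be the crux of the argument, and it is presumably the bijection credited to R. Stanley in the acknowledgments.
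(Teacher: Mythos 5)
Your overall strategy---verify the relations diagrammatically, then force bijectivity by a dimension comparison---is the same as the paper's, and your relation checks match Figure~\ref{fig:sp2}. The genuine divergence is in how the dimension of $W_n$ is obtained, and that is also where your argument has a hole. The paper does not count reduced webs combinatorially at all: it invokes Kuperberg's theorem \cite[Theorem 6.1]{K} that $\dim W_n$ equals $\dim Inv(V_{(1)}^{\otimes n}\otimes V_{(1,1)}^{\otimes n})$ for $\mathfrak{sl}_3$, identifies that invariant dimension with the Kostka number $K_{3^n,1^n2^n}$, and then uses a purely tableau-theoretic bijection (this, not a web bijection, is the one credited to Stanley) between such semistandard tableaux and pairs of standard Young tableaux of a common at-most-three-column shape, hence with $(4,3,2,1)$-avoiding permutations. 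You instead propose to count reduced webs directly by a planar growth-rule bijection with pairs of tableaux; such a bijection does exist (it is essentially the Khovanov--Kuperberg growth algorithm, cf.\ \cite{KKh}), but you leave it as an unproved assertion, and it is substantially harder than the tableau identity the paper actually needs. As written, the crux of your proof is a citation-shaped gap; replacing it with the appeal to \cite[Theorem 6.1]{K} closes it. Two smaller points: your shortcut ``surjective plus $\dim W_n\ge\dim TLM_n^3$ implies isomorphism'' is legitimate linear algebra and lets you avoid a separate injectivity argument, whereas the paper instead establishes injectivity independently via the consistent-labeling triangularity argument of Theorem~\ref{thm:rhom} (which also yields the faithfulness of $\kappa$ needed later); and both you and the paper implicitly rely on \cite{BK} for the fact that the presented algebra $TLM_n^3$ has dimension exactly $\sum_\lambda (f^\lambda)^2$ over at-most-three-column shapes, so you are on equal footing there.
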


\begin{proof}

The defining relations of $TLM_n^3$ are easily verified inside $W_n$, as seen on Figure \ref{fig:sp2}. 

\begin{figure}[h!]
\begin{center}
\input{sp2.pstex_t}
\end{center}
\caption{}\label{fig:sp2}
\end{figure}

On the other hand, according to \cite[Theorem 6.1]{K} the dimension of $W_n$ is equal to the number of $\mathfrak {sl}_3$-invariants $Inv(V_{(1)}^{\otimes n} \otimes V_{(1,1)}^{\otimes n})$, where $V_{(1)}$ and $V_{(1,1)}$ are the two fundamental representations of $\mathfrak {sl}_3$. This number is easily seen to be equal to the Kostka number $K_{3^n, 1^n2^n}$. The letter enumerates the semi-standard tableaux which are in bijection with pairs of standard Young tableaux of the same shape $\lambda$ such that $|\lambda|=n$ and $\lambda$ has at most $3$ columns. This number is exactly the dimension of $TLM_n^3$. It follows from RSK correspondence and the theory of Greene-Kleitman invariants that this number is equal to the number of $(4,3,2,1)$-avoiding permutations, cf. \cite{St}. 

Thus the dimensions of $W_n$ and $TLM_n^3$ are equal. We postpone the proof of injectivity till Theorem \ref{thm:rhom}. The two facts together imply that $\eta$ is an isomorphism.
\end{proof}

In particular, one can define elements $e_D = \eta^{-1}(D)$ of $TLM_n^3$ for each web $D$ occurring in $W_n$. As $D$ runs over irreducible webs $e_D$ form a {\it {web basis}} of $TLM_n^3$. Note that unlike in the case of Temperley-Lieb algebra, the $e_D$ are not always monomials in the $e_i^{(j)}$. For example in $TLM_4^3$ one of the webs can be expressed as $[2]_q (e_2^{(1)}e_1^{(1)}e_2^{(2)}-e_2^{(2)})$.

\section{Consistent labelings} \label{sec:cons}

\subsection{Definition and statistic}

Denote $\mathfrak M_n$ the set of webs $D$ of size $n$. Each edge $e \in D$ has two sides, which we denote $e_+$ and $e_-$, so that every edge is directed from its positive to its negative side.  A {\it {consistent labeling}} of $D$ is an assignment of a label $f: e_\pm \mapsto 1,2,3,1',2',3'$ to each side of each edge so that the following conditions hold:

\begin{enumerate}
    \item positive sides are labeled with $1,2,3$, negative sides are labeled with $1',2',3'$;
    \item if $e_+$ is labeled with $i$ then $e_-$ is labeled with $i'$;
    \item the labels adjacent to the same vertex are distinct, i.e. the sides of edges adjacent to every degree $3$ vertex in $D$ are labeled either with $1,2,3$ or with $1',2',3'$.
\end{enumerate}

The labels adjacent to boundary vertices of $D$ are called {\it {boundary labels}}. The restriction $g$ of a labeling $f$ to the boundary is called a {\it {boundary labeling}}. Let $L_D$ denote the set of all consistent labelings of $D$, and $L_{D,g}$ denote the set of all consistent labelings with a prescribed boundary labeling $g$.

\begin{figure}[h!]
\begin{center}
\input{sp16.pstex_t}
\end{center}
\caption{}\label{fig:sp16}
\end{figure}

An example of a consistent labeling is given in Figure \ref{fig:sp16}. For this web and this boundary labeling there exists only one consistent labeling, i.e., $|L_{D,g}|=1$.

Let a {\it {singularity}} of a consistent labeling be one of the following:
\begin{enumerate}
     \item a degree $3$ vertex in $D$;
     \item a point where an edge of $D$ is tangent to a vertical line.
\end{enumerate}

Readjusting the embedding of $D$ one can clearly make its edges non-vertical, and make no two singularities lie on one vertical line. 

Let $v$ be a singularity of the first kind. Let $p, q, r$ be labels adjacent to $v$, so they are either $1,2,3$ or $1',2',3'$. Define order on the labels as follows: $1<2<3$ and $3'<2'<1'$. Let $l_v$ be the vertical line passing through $v$. For an unordered pair of labels $(p,q)$ adjacent to $v$ define 

$$
\alpha(p,q) = 
\begin{cases}
-1 & \text{if $p<q$, $p$ and $q$ both lie to the left of $l_v$ and $p$ is above $q$;}\\
-1 & \text{if $p<q$, $p$ and $q$ both lie to the right of $l_v$ and $p$ is below $q$;}\\
1 & \text{if $p>q$, $p$ and $q$ both lie to the left of $l_v$ and $p$ is above $q$;}\\
1 & \text{if $p>q$, $p$ and $q$ both lie to the right of $l_v$ and $p$ is below $q$;}\\
0 & \text{if $p$ and $q$ lie on different sides of $l_v$.}
\end{cases}
$$

Let $\alpha(v) = \alpha(p,q) + \alpha(p,r) + \alpha(q,r)$ be the sum taken over all pairs of labels adjacent to $v$. 

Let $v$ be now a singularity of the second kind, and again let $l_v$ be the vertical line passing through $v$. Recall that each edge of $D$ is oriented from some $i$ to $i'$. Assume that at $v$ line $l_v$ is tangent to the edge labeled $i$ at the beginning, $i'$ at the end. Let 

$$
\alpha(v) = 
\begin{cases}
4-2i & \text{if edge is oriented down around $v$ and touches $l_v$ from the left;}\\
2i-4 & \text{if edge is oriented down around $v$ and touches $l_v$ from the right;}\\
4-2i & \text{if edge is oriented up around $v$ and touches $l_v$ from the right;}\\
2i-4 & \text{if edge is oriented up around $v$ and touches $l_v$ from the left;}
\end{cases}
$$

Now for a consistent labeling $f$ of $D$ define $$\alpha(f) = \prod_v q^{\frac{\alpha(v)}{4}},$$ where the product is taken over all singularities of a particular embedding of $D$. 

\begin{example}
The leftmost singularity $v$ on Figure \ref{fig:sp16} has $2'$ and $3'$ to the left of $l_v$, $2'$ above $3'$, and $1'$ to the right of $l_v$. Then $\alpha(2',3') = 1$ while $\alpha(1',2')=\alpha(1',3')=0$, which results in $\alpha(v) = 1$. For this embedding of the web  there are no singularities of the second kind and for this particular labeling $f$ we have $\alpha(f) = q^{-1/2}$.
\end{example}

The following is the key property of statistic $\alpha$.

\begin{lemma}
$\alpha(f)$ does not depend on the particular embedding of web $D$.
\end{lemma}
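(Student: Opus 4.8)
The plan is to prove this by an isotopy-invariance argument in the spirit of Morse (or Cerf) theory. Fix the web $D$ and the consistent labeling $f$. Since the labels live on the edges, they are carried along by any deformation, so it suffices to compare two \emph{generic} embeddings of $D$ (no two singularities on one vertical line, no vertical edges) having the same boundary. Any two such embeddings are joined by a one-parameter family of embeddings of $D$, which we may take to be generic; along it the set of singularities and their left-to-right order by $x$-coordinate are locally constant, changing only at finitely many times where exactly one elementary event occurs: \emph{(a)} two singularities momentarily share an $x$-coordinate and then exchange horizontal order; \emph{(b)} a pair of vertical tangencies is born (or dies) on a single edge at a degenerate inflectional tangency; \emph{(c)} a vertical tangency merges with the degree-$3$ vertex at the end of its edge, moving that edge from one side of $l_v$ to the other. (Boundary vertices admit only degree-$1$ versions of (a) and (c), handled identically.) Since $\alpha(f)=\prod_v q^{\alpha(v)/4}$ is a product of purely local factors, it suffices to show the total exponent $\sum_v \alpha(v)$ is unchanged by each event. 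Event (a) is immediate: each $\alpha(v)$ depends only on the configuration near $v$, so reordering two far-apart singularities alters no factor.

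For event (b) the two tangencies lie on one edge, hence share the label $i$; being born together with a common vertical velocity they share the up/down orientation, while one is a local maximum of $x$ (the edge lies to its left, touching $l_v$ ``from the left'') and the other a local minimum (``from the right''). Their contributions are thus $2i-4$ and $4-2i$, summing to $0$, so $\sum_v\alpha(v)$ is preserved.

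Event (c) is the crux. Here a turnback of an edge labeled $i$ is absorbed into its endpoint $v$: one embedding has the edge leaving $v$ to the right of $l_v$, turning back at a tangency and continuing left, while the other has it leaving directly to the left with no tangency. At the moment of crossing the edge is above (resp.\ below) the other two edges at $v$, so the only pairs whose $\alpha$-contribution changes are the two pairs $(i,a),(i,b)$ formed with the remaining labels $\{a,b\}=\{1,2,3\}\setminus\{i\}$ (or the primed analogue at a sink); these switch between ``both on one side'' and ``opposite sides'', the latter contributing $0$, while the pair $(a,b)$ is untouched. A direct evaluation from the definition shows the net change in $\alpha(v)$ equals $\operatorname{sign}(a-i)+\operatorname{sign}(b-i)=2i-4$ for an over-the-top crossing (and $4-2i$ for an under-the-bottom crossing), which is exactly the contribution of the vanishing tangency, namely ``up, from left'' $=2i-4$ (resp.\ ``down, from left'' $=4-2i$). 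Hence the vertex change and the tangency value cancel, and $\sum_v\alpha(v)$ is again preserved. The remaining sub-cases---whether the clean side is the incoming or outgoing embedding, up versus down crossings, and source versus sink vertices---are checked the same way.

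The main obstacle is event (c): one must align all the sign conventions (which side of $l_v$ an extremum touches, and the up/down orientation produced by each crossing) with the piecewise definitions of $\alpha$ so that the vertex change and tangency value cancel on the nose. The one genuinely structural input is that the three labels at any internal vertex form the complete set $\{1,2,3\}$ (or $\{1',2',3'\}$); this is precisely what makes the two changing pair-contributions sum to $\pm(4-2i)$ via the identity $\operatorname{sign}(a-i)+\operatorname{sign}(b-i)=\pm(4-2i)$ for $\{a,b\}=\{1,2,3\}\setminus\{i\}$. Thus the constants $4-2i$ in the tangency rule are exactly calibrated to this event, and once this observation is in hand event (c) reduces to a short finite check; the completeness of the list (a)--(c) of generic events is the standard Morse-theoretic input.
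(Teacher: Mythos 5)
Your proposal is correct and follows essentially the same route as the paper: decompose an isotopy between generic embeddings into elementary events, observe that a created or destroyed pair of vertical tangencies on one edge contributes cancelling factors $q^{(2i-4)/4}q^{(4-2i)/4}$, and check that when an edge swings past the vertical line through a trivalent vertex the resulting change $\pm\bigl((3-i)-(i-1)\bigr)=\pm(4-2i)$ in the vertex statistic is exactly offset by the new tangency. Your Morse-theoretic enumeration of events (a)--(c) just makes explicit the completeness claim the paper leaves as ``easy to see.''
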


\begin{proof}
The first part of Figure \ref{fig:sp14} demonstrates that we can bend or unbend edges as long as we do not change the direction at the ends. Indeed, the two singularities created by such bending cancel out, contributing total of $q^{\frac{2i-4}{4}}q^{\frac{4-2i}{4}}=1$ into $\alpha(f)$. Other cases are similar.

Now if we do want to change the direction of one of the edges at its end, we get the situation of the type shown in second part of Figure \ref{fig:sp14}. There an edge changes its side with respect to $l_v$, and because of that a new singularity is created. For each pair of labels $j<i$ adjacent to $v$ the value $\alpha(i,j)$ becomes one less than it used to be: it either used to be $1$ and became $0$, or it used to be $0$ and became $-1$. Similarly for each $j>i$ the value of $\alpha(i,j)$ is one more than it used to be. That results in the total factor of $q^{-\frac{i-1}{4}}q^{\frac{3-i}{4}}$. This however cancels out with the new factor $q^{\frac{2i-4}{4}}$ coming from the new singularity. Other cases are similar.

\begin{figure}[h!]
\begin{center}
\input{sp14.pstex_t}
\end{center}
\caption{}\label{fig:sp14}
\end{figure}

It is easy to see now that any two embeddings of the same web can be deformed one into another by a sequence of moves of the above two types. This implies the statement.
\end{proof}

Denote $$|L_{D,g}|_q = \sum_{f \in L_{D,g}} \alpha(f),$$ we refer to $|L|_q$ as to $q$-{\it {size}} of $L$. Note that when $q=1$ the $q$-size $|L_{D,g}|_q = |L_{D,g}|$ is just the number of elements in $L_{D,g}$.

\subsection{Properties}

Let $G_n$ be the set of all possible boundary labelings $g$ of the $2n$ boundary vertices, and consider the vector space $R_n$ spanned by the abstract variables $r_g$, $g \in G$. We define an algebra structure on $R_n$ as follows: $r_{g_1} r_{g_2}$ is equal to 
\begin{enumerate}
    \item $r_g$, where $g$ is the boundary labeling obtained by combining the left half of $g_1$ and right half of $g_2$, if the right half of $g_1$ is obtained from the left part of $g_2$ via map $i \mapsto i'$;
    \item $0$ otherwise.
\end{enumerate} 

It is not hard to see that this product turns $R_n$ into an associative algebra with unity. Consider the map $\kappa: W_n \longrightarrow R_n$ defined by $\kappa: D \mapsto \sum_{g \in G_n} |L_{D,g}|_q r_g$.

\begin{theorem} \label{thm:rhom}
Map $\kappa$ is an injective algebra homomorphism, and so is map $\eta$ of Theorem \ref{thm:eta}.
\end{theorem}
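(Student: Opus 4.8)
The plan is to establish, in order, that $\kappa$ respects the spider reduction rules (so it is well defined and the product computation is legitimate), that $\kappa$ is multiplicative under concatenation, and that $\kappa$ is injective; injectivity of $\eta$ then follows formally from the dimension count in Theorem \ref{thm:eta}.

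First I would verify that the weighted labeling count is invariant under the three moves of Figure \ref{fig:sp6}. Concretely, if a single spider move rewrites a web $W$ as a combination $\sum_i c_i W_i$ of reduced webs, I claim $|L_{W,g}|_q = \sum_i c_i\,|L_{W_i,g}|_q$ for every boundary labeling $g$. Since $\alpha(f)$ is embedding independent (previous Lemma), this reduces to a bounded local state sum in a disk around the move with the labels on the boundary of the disk fixed. For the closed loop one sums the two vertical-tangency contributions of the form $q^{(4-2i)/4}q^{(2i-4)/4}$ over $i\in\{1,2,3\}$ and obtains $q+1+q^{-1}=[3]_q$; for the bigon the two internal trivalent vertices force complementary labels on the through-strands, and summing over the internal edge label collapses to $[2]_q$ times the count of the single remaining strand; for the square one checks that the $\alpha$-weighted consistent labelings split exactly into the two families counted by the two terms on the right-hand side. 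Each is a finite computation from the definition of $\alpha$ at trivalent vertices and tangencies. This invariance is precisely what lets $\kappa$ descend to a well-defined map on $W_n$ and be compatible with reduction.

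Next I would prove $\kappa(D_1)\kappa(D_2)=\kappa(D_1 * D_2)$ for the (possibly non-reduced) concatenation $D_1*D_2$. Choosing the embedding in which $D_1$ lies entirely to the left of the gluing seam and $D_2$ to the right, the singularities of $D_1*D_2$ are the disjoint union of those of $D_1$ and $D_2$, so any consistent labeling $f$ restricts to consistent labelings $f_1,f_2$ with $\alpha(f)=\alpha(f_1)\alpha(f_2)$. A labeling of $D_1*D_2$ is exactly a pair $(f_1,f_2)$ whose labels match across the seam, and the condition ``$i\mapsto i'$'' defining the product on $R_n$ is precisely the matching of the right boundary of $f_1$ with the left boundary of $f_2$. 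Summing over all interior and seam labels yields $|L_{D_1*D_2,g}|_q=\sum |L_{D_1,g_1}|_q\,|L_{D_2,g_2}|_q$ over pairs $(g_1,g_2)$ combining to $g$, which is the coefficient of $r_g$ in $\kappa(D_1)\kappa(D_2)$. Combined with the previous paragraph, $\kappa(D_1D_2)=\kappa(\mathrm{reduce}(D_1*D_2))=\kappa(D_1*D_2)=\kappa(D_1)\kappa(D_2)$; preservation of the unit is immediate, so $\kappa$ is an algebra homomorphism.

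For injectivity it suffices to show that the images $\kappa(D)$ of the irreducible webs $D$ are linearly independent, since these webs span $W_n$. I would argue by triangularity: to each irreducible web $D$ attach a distinguished ``dominant'' boundary labeling $g_D$, read off the structure of $D$, for which $D$ admits a \emph{unique} consistent labeling (so $|L_{D,g_D}|_q$ is a nonzero monomial in $q$), as in the prototype of Figure \ref{fig:sp16}; and show that $|L_{D',g_D}|_q=0$ for every other web $D'$ not above $D$ in a suitable order. Granting such a family, the matrix $\big(|L_{D,g_{D'}}|_q\big)$ is triangular with nonzero diagonal, forcing the $\kappa(D)$ to be independent. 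I expect this to be the main obstacle: producing $g_D$ and proving simultaneously both the uniqueness of the dominant labeling for $D$ and the vanishing for all smaller webs. Conceptually this is the statement that $R_n$ is the matrix algebra (on boundary strings) through which $W_n$ acts in Kuperberg's fundamental representation, and that irreducible webs act as linearly independent operators; the labeling model is designed to make this separation combinatorially visible. Finally, since $\eta$ is surjective ($W_n$ is generated by the two diagram types of Figure \ref{fig:sp1}, namely $\eta(e_i^{(1)})$ and $\eta([2]_q e_i^{(2)})$), injectivity of $\kappa$ gives $\dim W_n = N$, the number of irreducible webs, which by the count in the proof of Theorem \ref{thm:eta} equals $\dim TLM_n^3$; a surjection between algebras of equal finite dimension is injective, so $\eta$ is injective as well.
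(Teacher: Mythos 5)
Your first two steps (invariance of the $q$-weighted labeling count under the three spider moves, and multiplicativity of $\kappa$ under concatenation via the seam-matching of boundary labels) are correct and match what the paper does, modulo the paper treating the loop and bigon checks more tersely. The problem is the third step: your injectivity argument is not a proof but a plan, and the part you yourself flag as ``the main obstacle'' --- constructing, for each irreducible web $D$, a dominant boundary labeling $g_D$ with a unique consistent labeling, together with a partial order on irreducible webs for which $|L_{D',g_D}|_q$ vanishes below the diagonal --- is exactly the hard content and is never supplied. Without it you have no triangular matrix and no linear independence, so the argument as written does not establish injectivity of $\kappa$, and the deduction of injectivity of $\eta$ collapses with it.

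The paper gets around this by running the triangularity over a different index set. Instead of irreducible webs, it takes, for each $(4,3,2,1)$-avoiding permutation $w$, a reduced word $\bar w$ and the monomial $e_{\bar w}=\prod e_{i_j}^{(1)}$, and builds the distinguished boundary labeling $g_w$ from a partition of $w$ into three increasing subsequences (which exists for every $(4,3,2,1)$-avoiding permutation). Labeling each crossing diagram $D_{s_i}$ so that output labels are transposed input labels shows $r_{g_w}$ occurs in $\kappa(\eta(e_{\bar w}))$ with coefficient $1$; the subword characterization of the Bruhat order shows $r_{g_w}$ cannot occur in $\kappa(\eta(e_{\bar v}))$ for $v<w$ in any linear extension of Bruhat order. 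This yields $\dim \mathrm{TLM}_n^3$ linearly independent elements in the image, hence injectivity of $\kappa\circ\eta$ and therefore of both maps, without ever having to exhibit a canonical labeling of an arbitrary irreducible web. If you want to salvage your version, you would need to actually produce $g_D$ and the order on webs and prove both the uniqueness and the vanishing claims; the permutation-indexed route is the standard way to avoid that.
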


\begin{proof}
The concatenation product  in $W_n$ is clearly compatible with the product structure of $R_n$. Thus in order to check that $\kappa$ is an algebra homomorphism, we need to verify that the defining relations of $W_n$ are satisfied in $R_n$. In particular it is enough to check that spider reduction rules are compatible with $\kappa$.

The first two reduction rules from Figure \ref{fig:sp6} are easy to verify. For example, a closed loop produces two singularities. If the loop is oriented for example clockwise, and labeled by $i$ and $i'$, then the two singularities contribute the factor $q^{\frac{4-2i}{4}}$ each. Thus as $i$ ranges through possible values of $1,2,3$, the total factor contributed is $q^{-1}+1+q = [3]_q$ just as it should be according to spider rules.

Let us therefore deal with the third rule. There is only one way to label the sides in the square configuration from the third rule. Namely, the boundary must contain a pair of labels $i$ and $i'$ and another pair of $j$ and $j'$. The only distinction comes from the relative position of those labels, the two possibilities shown on the Figure \ref{fig:sp15}.

\begin{figure}[h!]
\begin{center}
\input{sp15.pstex_t}
\end{center}
\caption{}\label{fig:sp15}
\end{figure}

One can see that in each case every consistent labeling of some web containing the square configuration is in bijection with a consistent labeling of exactly one of the two possible resolutions. Moreover, the statistic $\alpha(f)$ is preserved. For example in the upper case, the singularity with $i$, $k$ on the left cancels out with singularity with $k'$, $i'$ on the right; while the singularity with $k$, $j$ on the right cancels out with singularity with $j'$, $k'$ on the left. Other cases are similar.

Now we want to deduce the injectivity, i.e., that $W_n$ can be realized inside $R_n$. For each $(4,3,2,1)$-avoiding permutation $w \in S_n$ pick a reduced decomposition $\bar w = \prod s_{i_j}$, and consider the monomial $e_{\bar w} = \prod e_{i_j}^{(1)}$ in $TLM_n^3$. We use a triangularity argument to show that images $\kappa(\eta(e_{\bar w}))$ are linearly independent. 

Let $D_{\bar w}$ be the web obtained by concatenation of webs of generators $e_{i_j}^{(1)}$ according to $\bar w$. It is a well-known result (going back to Erd\"os) that every $(4,3,2,1)$-avoiding permutation can be partitioned into $3$ increasing subsequences. For a given $w$ pick one such partitioning and label the boundary of $D_{\bar w}$ according to this partitioning, obtaining $g_{w}$. For example, if $n=4$, $w = (1,4,3,2)$ and the partitioning is $(1,4) \cup (2) \cup (3)$, label the sources with $1,2,3,1$ and the sinks with $1',1',3',2'$ top to bottom. 

Note that the boundary labeling vector $r_{g_{w}}$ occurs in the decomposition of $\kappa(\eta(e_{\bar w}))$. To see this fact, label each diagram $D_{s_i}$ (constituting part of $D_{\bar w}$) so that the output labels are transposed input labels. Since $\bar w$ is a reduced decomposition, and since in $g_w$ entries having the same label are increasing, the resulting labeling is consistent. On the other hand, any permutation that produces $g_w$ can be written as combination of $w$ and some further transpositions between entries with same labels. The length of resulting permutation is bigger than that of $w$. Such a permutation cannot possibly be achieved by skipping some steps in ${\bar w}$. Therefore  $r_{g_{w}}$ occurs in decomposition of $\kappa(\eta(e_{\bar w}))$ with a non-zero coefficient (in fact with coefficient $1$).

Now take any extension of the Bruhat order. Then the boundary labeling vector $r_{g_{w}}$ cannot occur in decomposition of any $\kappa(\eta(e_{\bar v}))$ for $v<w$ in the chosen order. This essentially was proven above, given the sub-word characterization of the Bruhat order. Therefore the $\kappa(\eta(e_{\bar w}))$ are indeed linearly independent and the dimension of the image of $TLM_n^3$ in $R_n$ is equal to the number of $(4,3,2,1)$-avoiding permutations in $S_n$. As we already know this is exactly the dimension of $TLM_n^3$ and the injectivity of both $\kappa$ and $\eta$ follows.

\end{proof}

Let $D \in \mathfrak M_n$ be a reducible web, and let $e_D= \sum c_i e_{D_i}$ be the decomposition of the corresponding element of $TLM_n^3$ into irreducibles.  Let us consider the process of reduction of $D$ using the spider rules. This can be viewed as a binary tree with a branching whenever we apply the third rule. When descending towards one of the leaves of the tree, a coefficient is accumulated via the first two spider rules. The coefficient $c_i$ is equal to the sum $\sum_l c_{l,i}$ of the coefficients corresponding to leaves $l$ of the tree in which we end up with $D_i$. The Figure \ref{fig:sp8} illustrates a possible tree (a fragment of the whole web is shown).

\begin{figure}[h!]
\begin{center}
\input{sp8.pstex_t}
\end{center}
\caption{}\label{fig:sp8}
\end{figure}

Let us now start with a consistent labeling of $D$. From the proof of Theorem \ref{thm:rhom} we know that when each of the spider reduction rules is applied, we get a map from the consistent labelings of the original web to the consistent labellings of the resulting web. Furthermore, at each branching point the current labeling dictates into which of the two branches we go. Thus we can define the {\it {type}} of the original labeling $f$ as the irreducible web $D_i$ we end up with. Note that the type of a labeling a priori might depend on the choice of spider reduction steps. It seems likely that it is actually independent of the choices made, however it is not necessary for the further argument. From now on we assume that for every possible web one possible branching sequence of reduction steps is chosen.

Let $g$ be a boundary labeling for $D$. Let $L_{D,D_i,g}$ denote the set of consistent labelings of $D$ with boundary $g$ and of type $D_i$.

\begin{theorem} \label{thm:ci}
The following holds: 
$$
c_i = 
\begin{cases}
\frac{|L_{D,D_i,g}|_q}{|L_{D_i, g}|_q} & \text{if $|L_{D_i, g}|>0$;}\\
0 & \text{otherwise.}
\end{cases}
$$
\end{theorem}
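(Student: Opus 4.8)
The plan is to establish the stronger, unconditional identity
$$|L_{D,D_i,g}|_q = c_i\,|L_{D_i,g}|_q$$
for every irreducible web $D_i$ and every boundary labeling $g$, and then obtain the theorem by dividing by $|L_{D_i,g}|_q$. I would first observe that each $\alpha(f)$ is a single monomial $q^{m_f/4}$ with coefficient $+1$, so $|L_{D_i,g}|_q$ is a sum of monomials with positive coefficients and hence vanishes exactly when $L_{D_i,g}$ is empty. Thus the case distinction in the statement coincides with $|L_{D_i,g}|_q \neq 0$ versus $|L_{D_i,g}|_q = 0$; in the latter case the displayed identity forces $|L_{D,D_i,g}|_q = 0$ as well, so one simply recovers $c_i$ from any $g$ with $L_{D_i,g}$ nonempty.

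I would prove the displayed identity by induction on the number of spider reduction steps in the fixed reduction of $D$, examining the first rule applied; the base case $D = D_i$ irreducible is immediate since then $c_i = 1$ and both sides agree. The inductive step reuses the local analysis from the proof of Theorem \ref{thm:rhom}, which already records how consistent labelings and the statistic $\alpha$ transform under each of the three rules, the sub-webs produced by the first step carrying their own fixed reductions with strictly fewer steps.

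If the first step is the loop rule, so $D = [3]_q D'$, then each labeling of $D$ is a labeling of $D'$ together with a label $i \in \{1,2,3\}$ on the erased loop, whose two singularities multiply $\alpha$ by $q^{(4-2i)/2}$; summing over $i$ reproduces the factor $[3]_q$, exactly as in Theorem \ref{thm:rhom}. Since the loop is removed at once the type is unaffected, so $|L_{D,D_i,g}|_q = [3]_q|L_{D',D_i,g}|_q$, and the inductive hypothesis together with $c_i = [3]_q c_i'$ closes this case; the bigon rule $D = [2]_q D'$ is identical, its two internal labelings contributing $[2]_q$. If the first step is the square rule $D = D^{(1)} + D^{(2)}$, then by the $\alpha$-preserving bijection of Theorem \ref{thm:rhom} each labeling of $D$ corresponds to a labeling of exactly one resolution, with its type computed there; hence $L_{D,D_i,g}$ splits as a disjoint union and $|L_{D,D_i,g}|_q = |L_{D^{(1)},D_i,g}|_q + |L_{D^{(2)},D_i,g}|_q$, which by induction equals $(c_i^{(1)} + c_i^{(2)})|L_{D_i,g}|_q = c_i|L_{D_i,g}|_q$ since coefficients add across a branching.

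The expected main obstacle is bookkeeping rather than any new idea: one must fix the reduction tree once and for all so that the type is well defined, so that the labelings of $D$ really do partition by type, and so that the recursion for the $c_i$ (multiply by $[3]_q$ or $[2]_q$ at a loop or bigon, add at a branching) is matched step for step by the recursion for the $q$-sizes. Everything analytic --- that the $\alpha$-weighted sum over the internal labels of a removed loop or bigon equals precisely $[3]_q$ or $[2]_q$, and that the square rule is an $\alpha$-preserving bijection onto a single resolution --- is already contained in the proof of Theorem \ref{thm:rhom}, so the argument is in essence a type-refined, $\alpha$-weighted rerun of that verification.
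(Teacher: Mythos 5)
Your proposal is correct and follows essentially the same route as the paper: the paper phrases the argument as a surjection from type-$D_i$ labelings of $D$ onto labelings of the $D_i$-leaves of the fixed reduction tree, with fibers of relative $q$-size $c_{l,i}$, which is exactly your induction over the tree unrolled, using the same local facts (loop gives $[3]_q$, bigon gives $[2]_q$, square rule is an $\alpha$-preserving bijection onto one resolution) already verified in Theorem \ref{thm:rhom}. Your explicit handling of the degenerate case via positivity of the monomials $\alpha(f)$ is a small but welcome addition of detail.
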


\begin{proof}
Choose a particular sequence of reductions producing a binary tree as above. The spider reduction rules provide a surjection from the consistent labelings of $D$ with boundary $g$ and of type $D_i$ onto the consistent labelings of the $D_i$-leaves. The relative $q$-size of the fiber of each letter is exactly the coefficient that appears by applying the spider rules when we descend into that particular leaf. Therefore the set $L_{D,D_i,g}$ of consistent labelings of $D$ with boundary $g$ of type $D_i$ gets partitioned into the union of sets with $q$-size $c_{l,i} |L_{D_i, g}|_q$ as $l$ runs over type $D_i$ leaves and $c_{l,i}$ is the coefficient created when descending into leaf $l$. Since by definition $c_i = \sum_l c_{l,i}$, we conclude the needed statement.
\end{proof}

The rest of the paper proceeds with $q=1$.

\section{Web immanants and total positivity} \label{sec:imm}

For a function $f: S_n \longrightarrow \mathbb C$ and an $n \times n$ matrix $X$ an {\it {immanant}} $\Imm_f(X)$ is defined by $$\Imm_f(X) = \sum_{w \in S_n} f(w) x_{1,w(1)} \dotsc x_{n,w(n)}.$$ We define { {web immanants}} by analogy with the {\it {Temperley-Lieb immanants}} of Rhoades and Skandera \cite{RS1}.

For each irreducible web $D \in \mathfrak M_n$ and $w \in S_n$ let $f_D(w)$ be the coefficient of $e_D$ in the image $\theta_3(w)$. Then the immanants $$\Imm_D(X) = \Imm_{f_D}(X) = \sum_{w \in S_n} f_D(w) x_{1,w(1)} \dotsc x_{n,w(n)}$$ are called {\it {web immanants}}. 

Following \cite{RS2} let $z_{[i,j]}$ denote the sum of all elements of a parabolic subgroup of $S_n$ generated by $s_i, \ldots, s_{j-1}$. We will make use of the following lemma.

\begin{lemma} \label{lem:mon}
\begin{enumerate}
    \item $\theta_3(z_{[i,i+1]})  = \theta_3(s_i+1) = e_i^{(1)}$;
    \item $\theta_3(z_{[i,i+2]}) = 2 e_i^{(2)}$;
    \item $\theta_3(z_{[i,i+k]}) = 0$ for $k>2$.
\end{enumerate}
\end{lemma}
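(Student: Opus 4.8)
The plan is to compute $\theta_3(z_{[i,i+k]})$ directly by rewriting $z_{[i,i+k]}$ in terms of the Hecke generators $g_j$ (specialized at $q=1$, so $g_j = \theta_3^{-1}$-preimages of $e_j^{(1)}-1$, i.e. $\theta_3(g_j) = e_j^{(1)}-1$) and then identifying the image with the stated monomials $e_i^{(1)}$, $2e_i^{(2)}$, and $0$ via the defining relations of $TLM_n^3$. Since $q=1$, the Hecke relation $g_i^2 = (q-1)g_i + q$ becomes $g_i^2 = 1$, so the $g_i$ generate $\mathbb{C}S_n$ and $\theta_3$ sends $s_i \mapsto e_i^{(1)}-1$. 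The first identity is immediate: $z_{[i,i+1]} = s_i + 1$, so $\theta_3(z_{[i,i+1]}) = (e_i^{(1)}-1)+1 = e_i^{(1)}$, establishing part (1).

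For part (2), I would write $z_{[i,i+2]}$ as the sum over the six elements of the parabolic subgroup $\langle s_i, s_{i+1}\rangle \cong S_3$, namely $1 + s_i + s_{i+1} + s_is_{i+1} + s_{i+1}s_i + s_is_{i+1}s_i$. Applying $\theta_3$ and substituting $s_i \mapsto e_i^{(1)}-1$, $s_{i+1}\mapsto e_{i+1}^{(1)}-1$, I expand the resulting polynomial in the generators $e_i^{(1)}, e_{i+1}^{(1)}$. The key simplification tools are the $TLM_n^3$ relations at $q=1$: $(e_j^{(1)})^2 = [2]_1 e_j^{(1)} = 2 e_j^{(1)}$, together with the $k=2$ defining relation which (again at $q=1$, where $[2]_1 = 2$, $[3]_1 = 3$) reads $e_i^{(2)} = \tfrac{1}{2}(e_i^{(1)} e_{i+1}^{(1)} e_i^{(1)} - e_i^{(1)})$. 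After collecting terms, the degree-one and degree-two contributions should cancel and combine into precisely $e_i^{(1)}e_{i+1}^{(1)}e_i^{(1)} - e_i^{(1)} = 2 e_i^{(2)}$, yielding the claim. This is the main computational step, and I expect the bookkeeping of cancellations among the six terms to be the only real obstacle; it is routine but must be done carefully, keeping track of which monomials reduce via $(e_j^{(1)})^2 = 2e_j^{(1)}$.

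For part (3), the cleanest route is representation-theoretic rather than a direct expansion, which would be unwieldy for large $k$. The element $z_{[i,i+k]}$ is (up to scalar) the Young symmetrizer for the trivial representation of the parabolic subgroup $S_{k+1}$ sitting on strands $i,\dots,i+k$; acting on any module it projects onto the $S_{k+1}$-invariants. The essential point is that $TLM_n^3$ is a quotient of $\mathbb{C}S_n$ in which only irreducibles indexed by Young diagrams with at most three columns survive. For $k+1 \geq 4$, the one-dimensional trivial representation of $S_{k+1}$ corresponds to the single row $(k+1)$, i.e. a shape with $k+1 \geq 4$ columns, which has been killed in passing to the quotient. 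Concretely, $z_{[i,i+k]}$ contains the factor $z_{[i,i+3]}$ associated to four consecutive strands, so it suffices to show $\theta_3(z_{[i,i+3]}) = 0$; this in turn follows because $e_i^{(3)} = 0$ in $TLM_n^3$ and $z_{[i,i+3]}$ maps into the two-sided ideal generated by the $e_j^{(3)}$ (equivalently, into the image of the $S_4$-symmetrizer, which must vanish since no four-column shape survives). I would therefore reduce (3) to the single identity $\theta_3(z_{[i,i+3]}) = 0$ and verify that directly using the relation $e_j^{(3)} = 0$ together with the inductive formula expressing $e_j^{(3)}$ in terms of $e_j^{(2)}$.
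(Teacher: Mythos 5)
Your proposal is correct and follows essentially the same route as the paper: part (1) is immediate, part (2) amounts to verifying the identity $(s_i+1)(s_{i+1}+1)(s_i+1)-(s_i+1)=z_{[i,i+2]}$ (your term-by-term expansion using $s_i^2=1$ and $(e_i^{(1)})^2=2e_i^{(1)}$ is exactly this computation), and part (3) reduces, as in the paper, to $z_{[i,i+3]}=6e_i^{(3)}=0$ together with the fact that $z_{[i,i+3]}$ divides $z_{[i,i+k]}$ in $\mathbb{C}S_n$. Your supplementary representation-theoretic remark is a reasonable gloss (though, to be precise, one needs the branching-rule fact that no $S_n$-irreducible with at most three columns contains the trivial $S_4$-representation upon restriction), but since you ultimately fall back on the concrete verification, nothing is missing.
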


\begin{proof}
The first part is clear from definition. For the second part one checks that $(s_i+1)(s_{i+1}+1)(s_i+1)-(s_i+1)=z_{[i,i+2]}$. Finally, for the third part one can check that $z_{[i,i+3]} = 6 e_i^{(3)}=0$ and for any $k>3$, $z_{[i,i+3]}$ is a factor of $z_{[i,i+k]}$ in $\mathbb CS_n$.
\end{proof}

Now we are ready to consider the properties of web immanants.


Recall that a real matrix is {\it {totally nonnegative}} if the determinants of all its minors are nonnegative, see for example \cite{FZ} and references there. An immanant is totally nonnegative if, when applied to any totally nonnegative matrix, it produces a nonnegative number. For example, by definition the determinant is totally nonnegative. The following theorem is similar to \cite[Theorem 3.1]{RS1} and \cite[Proposition 32]{LP}.

\begin{theorem} \label{thm:nn}
Web immanants are totally nonnegative.
\end{theorem}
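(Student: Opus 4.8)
The plan is to reduce the statement to the known total nonnegativity of Temperley--Lieb immanants of Rhoades and Skandera. The starting point is the observation that web immanants are defined through the map $\theta_3$, which factors through $\theta_2$: since $TLM_n^3$ is a quotient of $H_n(q)$ sitting between $H_n$ and $TL_n$, and since at $q=1$ all of these become algebras over $\mathbb{C}S_n$, the coefficient functions $f_D(w)$ extracting $e_D$ from $\theta_3(w)$ should be expressible as nonnegative combinations of the analogous Temperley--Lieb coefficient functions. If that holds, then each $\Imm_D$ is a nonnegative linear combination of Temperley--Lieb immanants, and total nonnegativity follows immediately from \cite[Theorem 3.1]{RS1}.

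To make this precise I would argue as follows. First I would recall the fundamental positivity input for totally nonnegative matrices: by the theory of planar networks (the Lindstr\"om--Gessel--Viennot setup, which the paper later generalizes in Section \ref{sec:net}), it suffices to prove nonnegativity on the dense set of totally positive matrices, and moreover any product $x_{1,w(1)}\dotsc x_{n,w(n)}$ can be understood combinatorially. The cleaner route, however, is algebraic: I would use Lemma \ref{lem:mon} to show that the web immanant $\Imm_D$ can be written via the generators $z_{[i,j]}$. The key classical fact (from \cite{RS1}) is that for a totally nonnegative matrix $X$ and any parabolic sum $z_{[i,j]}$, the quantity $\sum_{w} (\text{coeff of } w \text{ in } z)\, x_{1,w(1)}\dotsc x_{n,w(n)}$ is nonnegative, and more generally immanants arising as coefficients in the Temperley--Lieb basis are nonnegative. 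Since Lemma \ref{lem:mon} expresses $\theta_3$ of the parabolic elements as honest nonnegative multiples of the generators $e_i^{(1)}$ and $e_i^{(2)}$ of $TLM_n^3$, and since the web basis element $e_D$ extracted by $f_D$ has nonnegative structure constants when expanded against these building blocks, each $f_D$ is a nonnegative combination of coefficient functions already known to yield totally nonnegative immanants.

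Concretely, the main step is to establish that for every permutation $w$, the expansion $\theta_3(w) = \sum_D f_D(w)\, e_D$ has the property that $f_D$ arises as a nonnegative sum of the Temperley--Lieb coefficient functions under the factorization $\theta_3 = (\text{web map}) \circ \theta_2'$ through the Temperley--Lieb quotient. Here I expect to invoke the isomorphisms $\eta$ and $\kappa$ of Theorems \ref{thm:eta} and \ref{thm:rhom}: the web basis is identified with irreducible webs, and the coefficients $c_i$ governing the passage from reducible to irreducible webs were shown in Theorem \ref{thm:ci} to equal ratios $|L_{D,D_i,g}|_q / |L_{D_i,g}|_q$, which at $q=1$ are \emph{nonnegative rational numbers} (ratios of cardinalities of sets of consistent labelings). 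This nonnegativity of the decomposition coefficients is exactly what converts the known Temperley--Lieb positivity into web positivity.

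The hardest part will be pinning down that the relevant coefficients are genuinely nonnegative \emph{as functions on the permutations}, rather than merely the reduction coefficients $c_i$ being nonnegative. The subtlety is that expressing $\Imm_D$ in terms of Temperley--Lieb immanants requires relating the web basis $\{e_D\}$ to the Temperley--Lieb basis in a manifestly positive way, and $TLM_n^3$ is a proper quotient, so one must check that projecting from the two-column world ($TL_n$) up to the three-column world ($TLM_n^3$) preserves positivity. I would handle this by writing each product $x_{1,w(1)}\dotsc x_{n,w(n)}$ as the immanant of the group algebra element $w$, expanding $w = \sum_D f_D(w) e_D$, and then using Lemma \ref{lem:mon} together with Theorem \ref{thm:ci} to express every $e_D$ as a nonnegative combination of the products $z_{[i,j]}$ (equivalently, of Kauffman-diagram basis elements of $TL_n$ pulled back), at which point \cite[Theorem 3.1]{RS1} applies termwise. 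I anticipate that verifying the nonnegativity is where the consistent-labeling technology of Section \ref{sec:cons} does its real work, since $|L_{D,D_i,g}|$ and $|L_{D_i,g}|$ are cardinalities and hence automatically nonnegative at $q=1$.
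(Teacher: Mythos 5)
Your proposal circles the right ingredients --- Lemma \ref{lem:mon}, the positivity of the spider reduction coefficients, and the Rhoades--Skandera/Stembridge positivity input --- but two of its load-bearing claims are wrong or backwards. First, the reduction to Temperley--Lieb immanants cannot work as stated: $\theta_3$ does not factor through $\theta_2$. The quotient maps go $H_n \to TLM_n^3 \to TL_n$, so it is $\theta_2$ that factors through $\theta_3$, and there is no way to recover the finer coefficient functions $f_D$ from the Temperley--Lieb ones. Indeed, the web immanants are linearly independent and number as many as the $(4,3,2,1)$-avoiding permutations, which exceeds the Catalan number of $(3,2,1)$-avoiding permutations indexing the Temperley--Lieb immanants, so they cannot all be linear (let alone nonnegative) combinations of the latter.

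Second, the direction of the key expansion is inverted in your ``concretely'' paragraph. The input one needs is Lemma \ref{lem:ste}: for $X$ totally nonnegative, the group-algebra element $\sum_w x_{1,w(1)}\cdots x_{n,w(n)}\, w$ equals $\sum_{z\in Z} c_z z$ with $c_z\ge 0$ and each $z$ a \emph{product} of parabolic sums $z_{[i,j]}$. Applying the linear functional $f_D$ gives $\Imm_D(X)=\sum_z c_z f_D(z)$, and it remains only to check $f_D(z)\ge 0$: by Lemma \ref{lem:mon}, $\theta_3(z)$ is a nonnegative multiple of a monomial in the $e_i^{(1)}, e_i^{(2)}$, i.e.\ a concatenation of generator webs, and the spider reduction rules (whose coefficients $[2]_q$, $[3]_q$ and sums are all nonnegative --- Theorem \ref{thm:ci} can be quoted here, though it is overkill) expand such a monomial nonnegatively in the web basis. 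You instead propose to express each $e_D$ as a nonnegative combination of the $z_{[i,j]}$'s, which is neither needed nor true in general, and to pull Kauffman-diagram basis elements back from $TL_n$, which again presupposes the nonexistent factorization. The consistent-labeling machinery of Section \ref{sec:cons} is not where the work happens; the proof is a short consequence of Lemma \ref{lem:ste}, Lemma \ref{lem:mon}, and the manifest positivity of the spider rules.
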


The proof resembles the proof of \cite[Proposition 2]{RS2}. In particular we will need the following lemma.

\begin{lemma} \cite[Lemma 2.5]{RS1}, \cite[Theorem 2.1]{Ste1} \label{lem:ste}
Given a totally nonnegative matrix $X$, it is possible to choose a set $Z$ of elements of $\mathbb CS_n$ of the form $z = \prod z_{[i_k, j_k]}$ and nonnegative numbers $c_z$, $z \in Z$ so that $$\sum_{w \in S_n} x_{1,w(1)} \dotsc x_{n,w(n)} w = \sum_{z \in Z} c_z z.$$
\end{lemma}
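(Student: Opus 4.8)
The plan is to reconstruct the standard proof of this (cited) lemma via planar networks, which also fits the Lindström-lemma viewpoint developed later in the paper. Write $\beta(X) = \sum_{w \in S_n} x_{1,w(1)} \dotsc x_{n,w(n)}\, w$ for the element in question. First I would invoke the classical fact (basic in the total-positivity literature, cf. \cite{FZ}) that every totally nonnegative $X$ is realized by a planar acyclic network $N$ drawn in a rectangle, with $n$ sources $s_1, \dots, s_n$ on the left (ordered top to bottom), $n$ sinks $t_1, \dots, t_n$ on the right, and nonnegative edge weights, so that $x_{ij} = \sum_{p \colon s_i \to t_j} \wt(p)$. Expanding each $x_{i,w(i)}$ as a sum over paths, one gets $\beta(X) = \sum_{\mathcal P} \wt(\mathcal P)\, w(\mathcal P)$, the sum ranging over all families $\mathcal P = (p_1, \dots, p_n)$ of paths with $p_i \colon s_i \to t_{w(i)}$ for a permutation $w = w(\mathcal P)$, where $\wt(\mathcal P)$ is the product of the weights of the edges used with multiplicity and the paths are allowed to share vertices.

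Next I would group the families by their edge-usage, that is, by the multiset of edges used together with multiplicities. Two families in the same class have the same weight $\wt_0 \ge 0$, since the weight depends only on the edge-usage. Thus the total contribution of one class is $\wt_0 \sum_{\mathcal P} w(\mathcal P)$, and the lemma reduces to showing that, within a single edge-usage class, the sum $\sum_{\mathcal P} w(\mathcal P)$ of realized permutations is a product $\prod_k z_{[i_k, j_k]}$ of parabolic sums.

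To see this I would sweep a vertical line left to right across $N$, perturbing the embedding so that it passes the internal vertices one at a time. Within a fixed edge-usage class the used edges form a fixed acyclic planar multigraph, so the only freedom is the matching of incoming to outgoing strands at each vertex; the global permutation is the left-to-right composition of these independent local choices, and families are in bijection with tuples of local matchings. At an internal vertex $v$ the strands of the family that pass through $v$ occupy a \emph{contiguous} block of positions in the current vertical order: if a strand not through $v$ were trapped between two strands entering $v$, it would be forced to cross one of them as they converge to the point $v$, violating planarity. Summing over all local matchings at $v$ ranges over all permutations of that contiguous block, contributing the parabolic sum $z_{[i_v, j_v]}$; composing over all vertices yields $\sum_{\mathcal P} w(\mathcal P) = \prod_v z_{[i_v, j_v]}$. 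Hence each class contributes $\wt_0 \prod_v z_{[i_v, j_v]}$ with $\wt_0 \ge 0$, and summing over classes expresses $\beta(X)$ in the required form.

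The main obstacle is precisely the last paragraph: justifying that the strands through each vertex form a contiguous block (the planarity argument), and that the local block-permutations are independent and compose to the global permutation, so that the class sum factors cleanly as a product of \emph{contiguous} parabolic sums $z_{[i,j]}$ rather than some less structured nonnegative combination. An alternative route, closer to \cite{Ste1}, avoids networks and instead uses the Loewner--Whitney factorization of $X$ into elementary bidiagonal and diagonal factors, building $\beta(X)$ up one factor at a time; the same contiguity phenomenon reappears there as the interval structure of the generators $s_i, \dots, s_{j-1}$ defining $z_{[i,j]}$.
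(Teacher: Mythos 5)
First, a point of comparison: the paper does not actually prove this lemma --- it imports it verbatim from \cite[Lemma 2.5]{RS1} and \cite[Theorem 2.1]{Ste1} --- so your reconstruction can only be measured against the standard cited argument. Your overall strategy (realize a totally nonnegative $X$ by a planar network with nonnegative weights, expand the cyclic element $\beta(X)$ over path families, group families by edge usage, and sweep left to right so that each internal vertex contributes a parabolic sum over a contiguous block) is the right one and is consistent with how the paper itself views the lemma (it remarks in Section 6 that Brenti's network realization is implicitly used here); your closing remark is also accurate that the Loewner--Whitney factorization route is closer to Stembridge's actual write-up.

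There is, however, a genuine flaw in your key step: the claimed bijection between families in a fixed edge-usage class and tuples of independent local matchings is false precisely in the case that matters, namely when paths share edges (edge-disjoint classes only reproduce the determinant). If two strands traverse the same pair of consecutive edges $e,f$ through a vertex $v$, the two local matchings at $v$ induce the \emph{identical} tuple of paths, so the map from matching-tuples to families is many-to-one and your asserted equality $\sum_{\mathcal P} w(\mathcal P) = \prod_v z_{[i_v,j_v]}$ fails. Concretely, take $n=2$ and the network in which both sources merge at a vertex $u$ onto a single edge $e$ leading to a vertex $v$, where the strands split to the two sinks: the sum over the (two) families in the unique class is $1+s_1=z_{[1,2]}$, while your ordered product over vertices gives $z_{[1,2]}\cdot z_{[1,2]}=2(1+s_1)$. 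The repair is that the overcount is \emph{uniform} over the class: the number of matching-tuples inducing a given family is $\prod_e \mathrm{mult}(e)!$ (one may independently relabel, edge by edge, the strands occupying each multiply-used edge), so the class sum equals $\left(\prod_e \mathrm{mult}(e)!\right)^{-1}\prod_v z_{[i_v,j_v]}$, still a nonnegative multiple of a product of interval parabolic sums, and the lemma's conclusion survives with adjusted $c_z\geq 0$. You should also tighten the contiguity argument: in a planar network edges never cross, so the ``trapped'' strand cannot be forced to cross anything; rather, it would have to terminate at a vertex inside the region bounded by the sweep line and the two converging edges, contradicting that $v$ is the leftmost unprocessed vertex --- and this needs an $x$-monotone drawing, which an arbitrary planar embedding need not admit but which the canonical Loewner--Whitney chip network does.
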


With this we are ready to prove the theorem.

\begin{proof}
Let $X$ be a totally nonnegative matrix and let $\sum c_z z$ be the expression as in Lemma \ref{lem:ste}. Then $$\Imm_D(X) = \sum c_z f_D(z).$$ Note however that by Lemma \ref{lem:mon} $\theta_3(z)$ is a monomial in the $e_i^{(j)}$. According to spider reduction rules each such monomial is a nonnegative combination of the $e_D$. Therefore $f_D(z) \geq 0$ for all $z \in Z$ and $\sum c_z f_D(z) \geq 0$.  
\end{proof}

\section{Complementary minors} \label{sec:min}

For two subsets $I, J \subset [n]$ of the same cardinality denote $\Delta_{I,J}(X)$ the minor of an $n \times n$ matrix $X$ with row set $I$ and column set $J$. A set of minors is called {\it {complimentary}} if each row and column index participates in exactly one of the minors. 

Let $(I_1, J_1)$, $(I_2, J_2)$ and $(I_3, J_3)$ be a triple of complementary minors. Define the boundary labeling $g$ by the following rule: $I_1, I_2, I_3$ prescribe which of the source vertices are adjacent to edge sides labeled by $1$s, $2$s and $3$s correspondingly, while $J_1, J_2, J_3$ prescribe which of the sink vertices are adjacent to edge sides labeled by $1'$s, $2'$s and $3'$s correspondingly. The following theorem is similar to \cite[Proposition 4.3]{RS1} and \cite[Theorem 7]{LP}.

\begin{theorem} \label{thm:tri}
We have $$\Delta_{I_1, J_1}(X) \Delta_{I_2, J_2}(X) \Delta_{I_3, J_3}(X) = \sum |L_{D_i,g}| \Imm_{D_i}(X),$$ where the sum is taken over all irreducible webs $D_i \in \mathfrak M_n$. Web immanants form a basis for the vector space generated by triples of complementary minors.
\end{theorem}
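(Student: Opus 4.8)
The plan is to reduce the stated identity to an equality of two functions on $S_n$ and then evaluate both sides as matrix entries of a single representation. First note that $f\mapsto \Imm_f$ is injective, since the monomials $x_{1,w(1)}\cdots x_{n,w(n)}$ are distinct for distinct $w$. Expanding the determinants, $\Delta_{I_1,J_1}(X)\Delta_{I_2,J_2}(X)\Delta_{I_3,J_3}(X)=\Imm_h(X)$, where $h(w)=\epsilon(w)$ with $\epsilon(w)=\prod_a \mathrm{sgn}(w|_{I_a})$ when $w(I_a)=J_a$ for every $a$, and $h(w)=0$ otherwise. Hence it suffices to prove the functional identity $h(w)=\sum_i |L_{D_i,g}|\,f_{D_i}(w)$ for all $w\in S_n$; summing against the monomials then yields the theorem.

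Next I identify the right-hand side. Writing $\theta_3(w)=\sum_i f_{D_i}(w)\,e_{D_i}$ in the web basis and applying the algebra homomorphism $\kappa\circ\eta$, where $\kappa(\eta(e_{D_i}))=\sum_{g'}|L_{D_i,g'}|\,r_{g'}$ at $q=1$, the coefficient of $r_g$ in $\Psi(w):=\kappa(\eta(\theta_3(w)))$ is exactly $\sum_i |L_{D_i,g}|\,f_{D_i}(w)$. Thus the goal becomes $[r_g]\Psi(w)=h(w)$. Here one checks that $R_n$ is the full matrix algebra on $V^{\otimes n}$, $V=\mathbb{C}^3$, spanned by one-sided boundary labelings: writing $r_g=E_{L(g),R(g)}$ for the matrix unit attached to the source-labeling $L(g)$ and sink-labeling $R(g)$, the product rule defining $R_n$ is precisely matrix multiplication. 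Since $\Psi$ is an algebra homomorphism, it is a representation of $S_n$ on $V^{\otimes n}$, and reading the consistent labelings of the generator web of Figure \ref{fig:sp1} shows that $\Psi(s_i)=\kappa(\eta(e_i^{(1)}-1))$ acts on the factors $i,i+1$ by swapping $e_a\otimes e_b\mapsto e_b\otimes e_a$ when $a\neq b$ and by $e_a\otimes e_a\mapsto -e_a\otimes e_a$ when $a=b$.

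The content is the computation of the entry $[r_g]\Psi(w)=\langle e_{L(g)},\,\Psi(w)\,e_{R(g)}\rangle$. Because each $\Psi(s_i)$ sends every basis vector to $\pm$ a basis vector, $\Psi(w)\,e_R=\pm\,e_{w\cdot R}$ is the place-permuted labeling carrying a sign; evaluating along a reduced word, this sign is $(-1)^N$ where $N$ counts the inversions of $w$ between positions carrying equal labels, so $N=\sum_a \mathrm{inv}(w|_{\text{block }a})$ and the sign is $\prod_a \mathrm{sgn}(w|_{\text{block }a})$. The entry is therefore nonzero precisely on the block-respecting $w$, with value $\prod_a \mathrm{sgn}(w|_{\text{block }a})$, matching $h(w)$. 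This sign bookkeeping, together with the orientation conventions relating source/sink labelings to row/column sets, is the main obstacle: one must verify that the support condition and the product of restriction signs coming from the representation agree with those produced by the determinant expansion, reconciling the $w\leftrightarrow w^{-1}$ discrepancy introduced by the place-permutation action.

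For the basis statement I argue in two halves. The $\Imm_{D_i}$ are linearly independent: by injectivity of $f\mapsto\Imm_f$ this reduces to independence of the coordinate functions $f_{D_i}$, which holds because $\theta_3$ is onto $TLM_n^3$, so a linear relation among the $f_{D_i}$ would yield a functional vanishing on all of $TLM_n^3$ and hence vanishing coefficients. For spanning, the identity just proved writes each complementary triple as $\sum_i |L_{D_i,g}|\,\Imm_{D_i}$, whose coefficient vector $(|L_{D_i,g}|)_g$ is the coordinate vector of $\kappa(\eta(e_{D_i}))$; by injectivity of $\kappa$ (Theorem \ref{thm:rhom}) these vectors are linearly independent in $i$, so the coefficient matrix has full column rank and the triples span $\mathrm{span}\{\Imm_{D_i}\}$. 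A short conservation argument on the labels, namely that each colour $a$ is preserved through every trivalent vertex, shows $|L_{D,g}|=0$ unless $|I_a|=|J_a|$ for all $a$, so all nonzero entries already sit in rows indexed by genuine complementary triples and the full-rank conclusion survives the restriction. Together these statements show that the web immanants form a basis of the span of triples of complementary minors.
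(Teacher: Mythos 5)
Your argument is correct, but it takes a genuinely different route from the paper's. The paper expands $\theta_3(w)=\prod(e_{i_j}^{(1)}-1)$ into $2^{\ell(w)}$ uncrossed webs, converts each coefficient into a count of labelings via Theorem \ref{thm:ci}, and then kills almost all terms with a sign-reversing involution on the leftmost unstable uncrossing; you instead push everything through $\kappa\circ\eta$ into $R_n\cong\mathrm{End}\bigl((\mathbb{C}^3)^{\otimes n}\bigr)$, observe that each $\Psi(s_i)$ is a signed permutation (monomial) matrix, and read off the single surviving matrix entry directly --- so no cancellation analysis and no appeal to Theorem \ref{thm:ci} is needed, only the homomorphism property from Theorem \ref{thm:rhom}. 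I checked the generator computation: the labelings of the web for $e_i^{(1)}$ do give $I+P-2\sum_a E_{(a,a),(a,a)}$ on factors $i,i+1$, hence $\Psi(s_i)=P-2\sum_a E_{(a,a),(a,a)}$ as you claim, and the sign along a reduced word is indeed $\prod_a\mathrm{sgn}(w|_{I_a})$ since each inversion is created exactly once and contributes $-1$ precisely when the two strands carry equal labels; the $w\leftrightarrow w^{-1}$ mismatch you flag is harmless because both the support condition and the restricted inversion counts are invariant under inverting. For the basis statement the paper cites the dimension count of Desarmenien--Kung--Rota, whereas your argument is self-contained: independence from surjectivity of $\theta_3$, spanning from full column rank of $(|L_{D_i,g}|)$ via injectivity of $\kappa$. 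The one place you are too glib is the conservation claim $|L_{D,g}|=0$ unless $|I_a|=|J_a|$: the purely local statement that each trivalent vertex meets one edge of each colour is not quite enough; you also need that the numbers of internal source-vertices and sink-vertices of $D$ coincide (count edge-tails and edge-heads: $n+3S=\#E=n+3T$), after which counting tails and heads of the colour-$a$ edges gives $|I_a|+S=|J_a|+T$ and hence $|I_a|=|J_a|$. With that half-line added, the proof is complete.
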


\begin{example}
The fact that $|L_{D,g}|=1$ in example on Figure \ref{fig:sp16} means that when the product of minors $$\left|  \begin{array}{cc}
x_{1,1} & x_{1,3}\\
x_{4,1} & x_{4,3}  
\end{array} \right|
\cdot x_{2,2} \cdot x_{3,4}$$ is decomposed into web immanants the coefficient of $\Imm_D$ for this particular irreducible web $D$ is equal to $1$.
\end{example}

Thus we have a positive combinatorial rule for expressing the products of triples of complementary minors in terms of web immanants. Note that unlike in Temperley-Lieb case the expression is not necessarily multiplicity-free, since it can happen that $|L_{D,g}|>1$. 

\begin{proof}
Let us start with a wiring diagram of a permutation $w$ corresponding to the reduced decomposition $\bar w = \prod s_{i_j}$. Then $\theta_3(w) = \prod (e_{i_j}^{(1)}-1)$ is an alternating sum $\sum c_D e_{D}$ of the $e_D$, where each web $D$ is obtained from the wiring diagram by uncrossing all crossings in one of the two ways, as shown on Figure \ref{fig:sp10}. We refer to them as {\it {vertical}} and {\it {horizontal}} uncrossings. Each horizontal uncrossing produces a minus sign coming from $-1$ in $e_{i_j}^{(1)}-1$. 

\begin{figure}[h!]
\begin{center}
\input{sp10.pstex_t}
\end{center}
\caption{}\label{fig:sp10}
\end{figure}

By Theorem \ref{thm:ci} we know that the coefficient in $e_D$ of $e_{D_i}$ for a particular irreducible web $D_i$ is equal to  $\frac{|L_{D,D_i,g}|}{|L_{D_i, g}|}$ for $D_i$-s such that $|L_{D_i,g}| \not = 0$, where $g$ can be chosen to be the boundary condition above. Then the coefficient of $x_{1,w(1)} \dotsc x_{n,w(n)}$ in $\sum |L_{D,g}| \Imm_D(X)$ is equal to $$\sum_{D, D_i, |L_{D_i,g}| \not = 0} c_D \frac{|L_{D,D_i,g}|}{|L_{D_i, g}|}  |L_{D_i,g}| = \sum_{D,D_i} c_D |L_{D,D_i,g}|.$$ Note that if $|L_{D_i,g}|=0$ then $|L_{D,D_i,g}|=0$ and thus sum in the formula can be taken over all irreducible $D_i$. Now, for a given $D$, $$\sum_{D_i} |L_{D,D_i,g}| = |L_{D,g}|.$$ Therefore, the coefficient of $x_{1,w(1)} \dotsc x_{n,w(n)}$ in $\sum |L_{D,g}| \Imm_D(X)$ is equal to the alternating sum $$\sum c_D |L_{D,g}|.$$

Note that there are two essentially different ways to label consistently a vertical uncrossing, as shown on Figure \ref{fig:sp11}. 

\begin{figure}[h!]
\begin{center}
\input{sp11.pstex_t}
\end{center}
\caption{}\label{fig:sp11}
\end{figure}

We refer to the first way as {\it {unstable}}, and to the second way as {\it {stable}}. Similarly, we refer to a horizontal uncrossing as {\it {stable}} if the labels on its two edges are equal, and {\it {unstable}} otherwise. Observe that every unstable uncrossing can be changed into a unique unstable uncrossing of the opposite kind, i.e., vertical to horizontal and horizontal to vertical.

Choose a planar embedding of the original wiring diagram of $w$ which does not have two crossings on the same vertical line. We define an involution on the set of all labelings of all possible uncrossed diagrams $D$ entering  $\sum c_D e_{D}$ as follows. Choose the leftmost {{unstable uncrossing}}. Swap the type of uncrossing, changing the labeling correspondingly. It is easy to see that this gives an involution.

Note that the two webs carrying the original and the resulting labelings enter $\sum c_D e_{D}$ with distinct signs, since one contains one more horizontal uncrossing than the other. Therefore corresponding terms in $\sum c_D |L_{D,g}|$ cancel out. The only terms that remain are the ones with all uncrossings stable. There is at most one such uncrossing/labeling, and it must have the following properties:
\begin{enumerate}
     \item if source $m$ is adjacent to label $i$ then sink $w(m)$ is adjacent to label $i'$ (here we say that $w$ {\it {agrees}} with $g$);
     \item all wires originating in sources with the same label uncross horizontally, all wires originating in sources with different labels uncross vertically.
\end{enumerate}
Then if the number of horizontal uncrossings is $l$, the resulting coefficient $c_D$ is given by
 $$
c_D = 
\begin{cases}
(-1)^l & \text{if $w$ agrees with $g$;}\\
0 & \text{otherwise.}
\end{cases}
$$
This number is exactly the coefficient of $x_{1,w(1)} \dotsc x_{n,w(n)}$ in $\Delta_{I_1, J_1}(X) \Delta_{I_2, J_2}(X) \Delta_{I_3, J_3}(X)$.

In order to see that web immanants form a basis it suffices to note that according to \cite{DKR} and the properties of Robinson-Schensted-Knuth insertion algorithm, the dimension of the space generated by products of triples of complementary minors is exactly the number of $(4,3,2,1)$-avoiding permutations.
\end{proof}

\section{Relation to Temperley-Lieb immanants} \label{sec:tl}

For a $(3,2,1)$-avoiding permutation $w$ and a permutation $v$ let $f_w(v)$ be the coefficient of $e_w$ in $\theta_2(v)$. In \cite{RS1} the {\it {Temperley-Lieb immanants}} were defined as $$\Imm_w^{TL}(X) = \sum_{v \in S_n} f_w(v) x_{1,v(1)} \dotsc x_{n,v(n)}.$$

Recall that each $(3,2,1)$-avoiding permutation $w$ corresponds to a non-crossing matching on $2n$ vertices, which is the Kauffman diagram for the basis element $e_w$ of the Temperley-Lieb algebra. By abuse of notation we denote this matching also as $w$. We turn it into an $A_1$-web by dropping an extra vertex on the edges which have both ends on the left or both ends on the right. A {\it {consistent labeling}} of an $A_1$ web is an assignment of labels $1,1',2,2'$ to sides of edges so that 

\begin{enumerate}
   \item every edge is labeled by $i,i'$;
   \item every internal vertex is adjacent either to $1,2$ or to $1',2'$.
\end{enumerate}

Let $(I_1,J_1)$ and $(I_2,J_2)$ be a pair of complementary minors, and let $g$ be the corresponding boundary labeling with $1,1',2,2'$. Let $M_{w,g}$ denote the set of consistent labelings of $w$ that are compatible with $g$. It is easy to see that $M_{w,g}$ is either empty or contains exactly one labeling. The following property of Temperley-Lieb immanants was proved in \cite[Proposition 4.3]{RS1}.

\begin{theorem} \label{thm:tl}
$$\Delta_{I_1,J_1}(X)\Delta_{I_2,J_2}(X) = \sum_w |M_{w,g}| \Imm_w^{TL}(X).$$  
\end{theorem}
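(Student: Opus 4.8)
The plan is to recognize Theorem \ref{thm:tl} as the $A_1$ (or $k=2$) analogue of Theorem \ref{thm:tri}, and to prove it by the same alternating-sum/involution mechanism, now applied to the two-column Temperley-Lieb setting. First I would expand each product of complementary minors as a signed sum over permutations: writing $\Delta_{I_1,J_1}(X)\Delta_{I_2,J_2}(X) = \sum_{w \in S_n}(\pm)\, x_{1,w(1)}\dotsc x_{n,w(n)}$, where the sign of a term is controlled by the relative order of the row/column indices within the two minors. The goal is then to compare, coefficient by coefficient in the monomial basis $x_{1,w(1)}\dotsc x_{n,w(n)}$, the left-hand side against the right-hand side $\sum_w |M_{w,g}|\,\Imm^{TL}_w(X)$.

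Next I would fix a reduced decomposition $\bar v = \prod s_{i_j}$ of a permutation $v$ and consider $\theta_2(v) = \prod(q^{1/2}e_{i_j}-1)$, specialized appropriately (recall the excerpt sets $q=1$ from Section \ref{sec:imm} onward). Exactly as in the proof of Theorem \ref{thm:tri}, expanding this product replaces each crossing in the wiring diagram of $v$ by either a vertical or a horizontal uncrossing, with each horizontal uncrossing contributing a sign from the $-1$ term. This realizes $\theta_2(v)$ as a signed sum $\sum_D c_D\, e^{TL}_D$ over Kauffman diagrams $D$, and the coefficient $f_w(v)$ of $e_w$ in $\theta_2(v)$ is picked out by those $D$ reducing to $w$. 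The $A_1$ consistent-labeling count $|M_{w,g}|$ (which the excerpt notes is always $0$ or $1$) plays the role that $|L_{D,g}|$ played in the $A_2$ case, and I would invoke the same book-keeping: the coefficient of $x_{1,v(1)}\dotsc x_{n,v(n)}$ in $\sum_w |M_{w,g}|\,\Imm^{TL}_w(X)$ equals the alternating sum $\sum_D c_D\,|M_{D,g}|$, where $|M_{D,g}|$ counts $g$-compatible consistent labelings of the uncrossed diagram $D$.

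The heart of the argument is then an involution that cancels all terms except the ``fully stable'' one. As in Theorem \ref{thm:tri}, I would classify each uncrossing at a crossing as \emph{stable} or \emph{unstable} according to whether the two strands carry equal or distinct labels, and pair each labeled diagram with the diagram obtained by toggling the type of uncrossing at its leftmost unstable crossing (in a fixed embedding with no two crossings on a common vertical line). Toggling changes the number of horizontal uncrossings by exactly one, hence flips the sign $c_D$, so paired terms cancel in $\sum_D c_D\,|M_{D,g}|$. The surviving configurations are those in which every uncrossing is stable, which forces: (i) $v$ agrees with $g$ in the sense that source $m$ with label $i$ goes to sink $v(m)$ with label $i'$; and (ii) strands with equal labels uncross horizontally while strands with distinct labels uncross vertically. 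Such a configuration exists for at most one labeling, and it produces coefficient $(-1)^l$ with $l$ the number of horizontal uncrossings when $v$ agrees with $g$, and $0$ otherwise --- precisely the coefficient of $x_{1,v(1)}\dotsc x_{n,v(n)}$ in $\Delta_{I_1,J_1}(X)\Delta_{I_2,J_2}(X)$.

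The main obstacle, and the step requiring the most care, is verifying that in the $A_1$ setting the stability dichotomy genuinely matches the sign structure of the minor product and that the involution is well-defined on labeled diagrams. Concretely, one must check that the two ways of labeling a vertical uncrossing (the analogue of Figure \ref{fig:sp11}, now with only two labels available) and the two ways of labeling a horizontal uncrossing interact with the toggling involution so that labelings are carried bijectively to labelings of the toggled diagram while the compatibility with $g$ and the count $|M_{D,g}|$ are preserved. Since there are only two labels, this is strictly simpler than the $A_2$ case already handled, and indeed the whole statement is recorded in \cite[Proposition 4.3]{RS1}; I would therefore present the proof as a specialization of the $A_2$ argument, indicating that the $A_1$ reduction rule $e_i^2 = [2]_q e_i$ and the non-crossing matching structure make both the labeling enumeration and the involution degenerate to the known Temperley-Lieb computation.
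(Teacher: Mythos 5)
Your proposal is mathematically sound, but it is worth pointing out that the paper itself does not prove Theorem \ref{thm:tl} at all: the sentence immediately preceding the statement attributes it to \cite[Proposition 4.3]{RS1}, and the theorem is simply imported as known input for the comparison between web immanants and Temperley--Lieb immanants in Section \ref{sec:tl}. So you have supplied an argument where the paper supplies a citation. That said, your route --- specializing the proof of Theorem \ref{thm:tri} to the two-label setting --- is exactly the natural internal proof, and every ingredient degenerates correctly: the expansion of $\theta_2(v)=\prod(e_{i_j}-1)$ into vertical/horizontal uncrossings with signs, the identification of the coefficient of a monomial in $\sum_w |M_{w,g}|\Imm_w^{TL}(X)$ with $\sum_D c_D |M_{D,g}|$, and the leftmost-unstable-uncrossing involution all carry over verbatim. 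The one step you should make explicit rather than wave at as ``the same book-keeping'' is the $A_1$ analogue of Theorem \ref{thm:ci}: a non-reduced Kauffman diagram $D$ with $\ell$ closed loops reduces to $2^{\ell} e_w$ at $q=1$, while each loop admits exactly two labels, so $|M_{D,g}| = 2^{\ell}|M_{w,g}|$; this is what justifies replacing the reduction coefficients by labeling counts. With that spelled out, your argument is a complete and self-contained proof, and it has the mild advantage of making Section \ref{sec:tl} independent of \cite{RS1}; the cost is only length, since the cited result is already multiplicity-free ($|M_{w,g}|\in\{0,1\}$) and standard.
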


Now let $(I,J)$  and $(I_3,J_3)$ be a pair of complementary minors. Let $\Imm_w^{TL}(X')$ be a Temperley-Lieb immanant of the submatrix $X'$ of $X$ with row set $I$ and column set $J$. Since $\Imm_w^{TL}(X')$ lies in the subspace of products of pairs of complementary minors of $X'$, the product $\Imm_w^{TL}(X') \Delta_{I_3,J_3}(X)$ lies in the subspace of products of triples of complementary minors of $X$. Therefore it must be expressible in terms of web immanants: $$\Imm_w^{TL}(X') \Delta_{I_3,J_3}(X) = \sum_D a_{w, I_3, J_3}^D \Imm_D(X).$$ 

There exists a {\it {forgetful map}} from consistent labelings of webs to consistent labelings of $A_1$-webs, given by deleting all edges labeled with $(3,3')$ and ignoring the loops, should any appear. Let us denote by $L_{D,g,w}$ the set of consistent labelings of a web $D$ compatible with the boundary labeling $g$ and mapped by the forgetful map to a consistent labeling of $w$. 

Let $\tilde g$ be a boundary labeling with positions of $3$s and $3'$s given  by $(I_3, J_3)$ and such that $M_{w, g}$ is non-empty. The following theorem gives an interpretation of the transition coefficients $a_{w, I_3, J_3}^D$.

\begin{theorem}
The size of $L_{D,\tilde g,w}$ does not depend on the particular choice of $\tilde g$ and we have $a_{w, I_3, J_3}^D= |L_{D,\tilde g,w}|$.
\end{theorem}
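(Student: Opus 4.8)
The plan is to compute the product $\Imm_w^{TL}(X')\,\Delta_{I_3,J_3}(X)$ in two different ways and compare. First I would apply Theorem~\ref{thm:tl} to the submatrix $X'$: for every splitting of $(I,J)$ encoded by an $A_1$-boundary labeling $h$ (equivalently, for every product of a pair of complementary minors of $X'$) one has $\Delta_{A,B}(X')\Delta_{A',B'}(X')=\sum_w |M_{w,h}|\,\Imm_w^{TL}(X')$. Multiplying this identity by $\Delta_{I_3,J_3}(X)$ and using $\Delta_{A,B}(X')=\Delta_{A,B}(X)$, the left-hand side becomes a product of a complementary triple of minors, so Theorem~\ref{thm:tri} rewrites it as $\sum_D |L_{D,\tilde g_h}|\,\Imm_D(X)$, where $\tilde g_h$ is the $A_2$-boundary labeling whose $3,3'$ positions come from $(I_3,J_3)$ and whose $1,2,1',2'$ positions are those of $h$. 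Expanding the right-hand side through $\Imm_w^{TL}(X')\Delta_{I_3,J_3}(X)=\sum_D a^D_{w,I_3,J_3}\Imm_D(X)$ and equating coefficients in the basis $\{\Imm_D\}$ of Theorem~\ref{thm:tri} yields
$$|L_{D,\tilde g_h}|=\sum_w |M_{w,h}|\,a^D_{w,I_3,J_3}\qquad(\star)$$
for every $h$.

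Next I would analyze $|L_{D,\tilde g_h}|$ combinatorially via the forgetful map. Deleting the $(3,3')$-edges of a consistent labeling $f$ of $D$ with boundary $\tilde g_h$ produces a consistent labeling of an $A_1$-web whose underlying non-crossing matching $w_f$ has boundary $h$; since $|M_{w_f,h}|=1$ this exhibits a partition $L_{D,\tilde g_h}=\bigsqcup_w L_{D,\tilde g_h,w}$ (with empty blocks for $w$ having $|M_{w,h}|=0$), giving $|L_{D,\tilde g_h}|=\sum_w |M_{w,h}|\,|L_{D,\tilde g_h,w}|$. The heart of the argument, and what I expect to be the main obstacle, is to show that $|L_{D,\tilde g_h,w}|$ does not depend on $h$ among the $h$ compatible with $w$ — this is exactly the first assertion of the theorem. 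Here I would use that $q=1$, so only the combinatorial consistency conditions matter. The key observation is that after deleting the $3$-edge at any trivalent vertex of $D$, the two remaining ($1$- and $2$-labeled) edges become consecutive along a single strand of $w_f$; hence swapping $1\leftrightarrow 2$ and $1'\leftrightarrow 2'$ along any chosen union of strands of $w$, while leaving every $3$-edge untouched, carries a consistent labeling to a consistent labeling (each affected vertex keeps the label set $\{1,2,3\}$ or $\{1',2',3'\}$) and changes only the colors of the boundary points lying on those strands. Since any two boundary labelings compatible with $w$ differ precisely by such strand recolorings, this operation is a bijection $L_{D,\tilde g_{h_1},w}\to L_{D,\tilde g_{h_2},w}$, proving the independence; write $c^D_w$ for the common value, so that the displayed identity reads $|L_{D,\tilde g_h}|=\sum_w |M_{w,h}|\,c^D_w$.

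Finally I would extract the termwise equality. Subtracting the last identity from $(\star)$ gives $\sum_w |M_{w,h}|\,(a^D_{w,I_3,J_3}-c^D_w)=0$ for every $h$. By Theorem~\ref{thm:tl} the products of pairs of complementary minors of $X'$ span the space of which $\{\Imm_w^{TL}(X')\}$ is a basis, so the matrix $(|M_{w,h}|)_{w,h}$ has linearly independent columns; therefore the only solution of this homogeneous system is $a^D_{w,I_3,J_3}=c^D_w$ for all $w$. Together with the independence established above this gives $a^D_{w,I_3,J_3}=c^D_w=|L_{D,\tilde g,w}|$ for every admissible $\tilde g$, which is the statement. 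The points needing care beyond the recoloring lemma are bookkeeping: that the minors of $X'$ coincide with the corresponding minors of $X$, that loops produced by the forgetful map (components disjoint from the boundary) are simply carried along by the recoloring and hence do not introduce $h$-dependence, and that the convention identifying $h$ with a product of a pair of minors matches the one feeding into Theorem~\ref{thm:tri}.
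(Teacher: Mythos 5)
Your proposal is correct and follows essentially the same route as the paper: the strand-recoloring bijection (lifted through the forgetful map) for independence of $\tilde g$, the identity $|L_{D,\tilde g_h}|=\sum_w|M_{w,h}|\,|L_{D,\tilde g_h,w}|$, and a comparison of the two expansions of the triple product of minors followed by inverting the matrix $(|M_{w,h}|)$. The only difference is cosmetic: you equate coefficients of the basis $\{\Imm_D\}$ before inverting over $w$, whereas the paper inverts first to get $\Imm_w^a(X)=\Imm_w^{TL}(X')\Delta_{I_3,J_3}(X)$ and then reads off the coefficients.
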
 

\begin{example}
For the irreducible web on Figure \ref{fig:sp16} the shown labeling is the only one having $I_3=\{3\}$, $J_3 = \{4\}$ and mapped by the forgetful map to the $A_1$-web corresponding to $w = (2,3,1)$, cf. Figure \ref{fig:sp17}. 

\begin{figure}[h!]
\begin{center}
\input{sp17.pstex_t}
\end{center}
\caption{}\label{fig:sp17}
\end{figure}

Thus the coefficient of $\Imm_D(X)$ in $\Imm_w^{TL}(X') \cdot x_{3,4}$ is $1$.
\end{example}

\begin{proof}
Note that any two consistent labelings of $w$ can be obtained one from the other by changing all labels along several of the edges of $w$. This gives a bijection between consistent labelings of $w$ for different boundary conditions $g$, with the only requirement that $M_{w,g}$ is non-empty. This bijection can be lifted to elements of the $L_{D,\tilde g,w}$ by simply doing the same changes on corresponding edges. This shows independence of the choice of $\tilde g$.

Now define alternative immanants $$\Imm_w^a (X) = \sum_D |L_{D,\tilde g,w}| \Imm_D(X).$$ Let $(I_1,J_1)$ and $(I_2,J_2)$ be a pair of complementary minors of $X'$ and let $g$ be the corresponding boundary labeling. We know from Theorem \ref{thm:tri} that $$\Delta_{I_1, J_1}(X) \Delta_{I_2, J_2}(X) \Delta_{I_3, J_3}(X) = \sum_D |L_{D,g}| \Imm_D(X).$$ However, $|L_{D,g}| = \sum_w |M_{w,g}| |L_{D,g,w}|$. Then we get $$\sum_D |L_{D,g}| \Imm_D(X) =  \sum_D \sum_w |M_{w,g}| |L_{D,g,w}| \Imm_D(X).$$ However, $|M_{w,g}| |L_{D,g,w}| = |M_{w,g}| |L_{D, \tilde g,w}|$ since if $M_{w,g}$ is non-empty we argued above that $|L_{D, g,w}| = |L_{D, \tilde g,w}|$ and otherwise both sides are $0$. Therefore $$\sum_D \sum_w |M_{w,g}| |L_{D,g,w}| \Imm_D(X) = \sum_w |M_{w,g}| \sum_D  |L_{D, \tilde g,w}| \Imm_D(X) = \sum_w |M_{w,g}| \Imm_w^a (X).$$On the other hand, from Theorem \ref{thm:tl} we know that $$\Delta_{I_1, J_1}(X) \Delta_{I_2, J_2}(X) \Delta_{I_3, J_3}(X) = \sum_w |M_{w,g}| \Imm_w^{TL}(X') \Delta_{I_3, J_3}(X).$$ From that we conclude by inverting that $\Imm_w^a (X) = \Imm_w^{TL}(X') \Delta_{I_3, J_3}(X)$ and thus $a_{w, I_3, J_3}^D= |L_{D,\tilde g,w}|$.

\end{proof}

\section{Weighted networks} \label{sec:net}

Let $G = (V,E)$ be a finite oriented acyclic planar graph with $n$ sources followed by $n$ sinks on the boundary of a Jordan curve. Let $\omega: E \longrightarrow R$ be a weight function assigning to each edge $e \in E$ the weight $\omega(e)$ in some commutative ring $R$. We refer to $N = (G, \omega)$ as to a {\it {weighted network}}. A {\it {path}} $p$ in $N$ is a path from a source to a sink, and $\omega(p) = \prod_{e \in p} \omega(e)$. Let $P(N)$ be the set of all paths in $N$.

Let ${\bf p} = (p_1, \ldots, p_{n})$ be a family of paths in $P(N)$ such that no four paths in ${\bf {p}}$ intersect in the same vertex. We denote $\omega({\bf p}) = \prod \omega(p_i)$. Removing all edges in $N$ which do not lie in any $p_i$, and marking as double or triple the edges used twice or thrice by $\bf p$, we get an underlying {\it {marked subnetwork}} $\tilde N({\bf p})$ of $N$. We denote by $\tilde N < N$ the fact that $\tilde N$ is a marked subnetwork of $N$, and we denote by $P(\tilde N)$ the set of all $\bf p$ such that $\tilde N = \tilde N({\bf p})$. 

Define a {\it {vertical uncrossing}} of a crossing of two or three paths by a procedure shown on Figure \ref{fig:sp12}. 

\begin{figure}[h!]
\begin{center}
\input{sp12.pstex_t}
\end{center}
\caption{}\label{fig:sp12}
\end{figure}

Define $D(\tilde N)$ to be the graph obtained by vertically uncrossing all the crossings in $\tilde N$. Then it is clear that $D(\tilde N) \in \mathfrak M_n$ is a (possibly reducible) web. Let $e_{D(\tilde N)} = \sum c_{i, \tilde N} e_{D_i}$ be the decomposition into irreducibles. 

Let ${\bf I} = (I_1, I_2, I_3)$ and ${\bf J} = (J_1, J_2, J_3)$ be disjoint partitions of $[n]$ such that $|I_k|=|J_k|$. Let $P_{\bf I,J}(N)$ be the set of families $\bf p$ such that paths which start in $I_k$ end in $J_k$, and the paths which start in the same $I_k$ do not intersect. 

Let $X(N)$ be the matrix given by $x_{i,j} = \sum \omega(p)$, where the sum is taken over all $p$ starting at $i$th source and ending at $j$th sink. The following statement is known as {\it {the Lindstr\"om lemma}}, cf. \cite{FZ}.

\begin{lemma}
The determinant $\Delta(X(N))$ is equal to $\sum_{\bf p} \omega({\bf p})$, where the sum is taken over all pairwise non-intersecting families of paths $\bf p$ in $P(N)$.
\end{lemma}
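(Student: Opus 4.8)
The plan is to run the classical Lindström--Gessel--Viennot sign-reversing involution, since $\Delta(X(N))$ here denotes the full $n \times n$ determinant $\det X(N)$. First I would expand the determinant by definition and distribute the path sums inside each matrix entry. Writing $\sigma({\bf p})$ for the permutation sending each source index $i$ to the index of the sink at which the path $p_i$ terminates, one obtains
$$\det X(N) = \sum_{\sigma \in S_n} \sgn(\sigma) \prod_{i=1}^n x_{i, \sigma(i)} = \sum_{\bf p} \sgn(\sigma({\bf p}))\, \omega({\bf p}),$$
where the final sum ranges over all families ${\bf p} = (p_1, \ldots, p_n)$ of paths with $p_i$ starting at the $i$th source. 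This reduces the lemma to showing that every term indexed by a family containing an intersection cancels.

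Next I would construct a weight-preserving, sign-reversing involution on the set of families ${\bf p}$ having at least two paths that share a common vertex. Given such a family, choose the smallest index $i$ for which $p_i$ meets another path, let $v$ be the first vertex along $p_i$ at which such a meeting occurs, and among the paths passing through $v$ select the one $p_j$ with the largest index $j$; then exchange the tails of $p_i$ and $p_j$ beyond $v$. The resulting family ${\bf p}'$ uses exactly the same multiset of edges, so $\omega({\bf p}') = \omega({\bf p})$, while $\sigma({\bf p}') = \sigma({\bf p}) \cdot (i\,j)$ differs by a transposition and hence $\sgn(\sigma({\bf p}')) = -\sgn(\sigma({\bf p}))$. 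The point to verify is that this rule is an involution: because the swap happens at the canonically chosen first crossing, reapplying the same rule to ${\bf p}'$ recovers the same vertex $v$ and the same extremal pair of indices, returning ${\bf p}$. Consequently all intersecting families cancel in pairs and contribute $0$.

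Finally I would identify the fixed points as exactly the pairwise non-intersecting (vertex-disjoint) families and pin down their sign. Since the $n$ sources and $n$ sinks lie in order on a Jordan curve and $G$ is planar and acyclic, a vertex-disjoint family cannot connect its sources to its sinks in any crossing pattern; a short planarity argument forces $\sigma({\bf p}) = \id$, so $\sgn(\sigma({\bf p})) = 1$. Hence the only surviving terms are precisely $\sum_{\bf p} \omega({\bf p})$ taken over non-intersecting ${\bf p}$, which is the claim.

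I expect the main obstacle to be the bookkeeping that makes the tail-swap a genuine involution: one must fix canonical choices (smallest starting index, first meeting vertex, extremal competing index) and check they are invariant under the swap. This is delicate here because the ambient setting permits up to three paths to pass through a single vertex, so the selection rule at $v$ must be stated so that it survives the exchange. The planarity step forcing $\sigma({\bf p}) = \id$ on the fixed points is routine but must be recorded explicitly, as it is exactly what converts the signed expansion into the advertised positive sum.
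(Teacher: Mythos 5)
Your argument is correct: it is the standard Lindstr\"om--Gessel--Viennot sign-reversing involution (expand the determinant over all path families, cancel intersecting families by swapping tails at a canonically chosen first meeting point, and use planarity of the source/sink arrangement to force the identity permutation on the vertex-disjoint fixed points). The paper itself supplies no proof of this lemma --- it is stated as a known result with a citation to Fomin--Zelevinsky --- so there is nothing to diverge from; your write-up would serve as a self-contained justification. The one point worth recording explicitly is the check that your selection rule (smallest $i$, first meeting vertex $v$ along $p_i$, largest competing index $j$ at $v$) is preserved by the tail swap: since the swap leaves the union of vertices of the other paths unchanged and fixes both initial segments up to $v$, the triple $(i,v,j)$ is recovered on the second application, so the map is indeed an involution even when three or more paths pass through $v$.
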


Let $$\Imm'_{D_i}(N) = \sum_{\tilde N < N} c_{i, \tilde N} \omega(\tilde N).$$  Let $g$ be the boundary labeling determined by $(\bf I, J)$ as before. The following theorem is similar to \cite[Proposition 26]{LP}.

\begin{theorem}
We have $$\Delta_{I_1, J_1}(X(N)) \Delta_{I_2, J_2}(X(N)) \Delta_{I_3, J_3}(X(N)) = \sum_i |L_{D_i,g}| \Imm'_{D_i}(N).$$
\end{theorem}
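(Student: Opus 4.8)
The plan is to expand both sides of the claimed identity as sums over the marked subnetworks $\tilde N < N$ and to match them term by term; the whole theorem then collapses to a single enumerative identity comparing path families with consistent labelings.

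First I would rewrite the left-hand side. Applying the Lindstr\"om lemma to each factor $\Delta_{I_k, J_k}(X(N))$ presents it as the sum of $\omega(\mathbf{p}^{(k)})$ over vertex-disjoint families of paths from the sources indexed by $I_k$ to the sinks indexed by $J_k$. Multiplying the three expansions, a typical term is a triple of such families, one per color; since vertex-disjointness within a color forbids two equally-colored paths from sharing a vertex, at most three paths (one of each color) can pass through any vertex, so the union is exactly a family $\mathbf{p} \in P_{\mathbf{I}, \mathbf{J}}(N)$, and conversely every such family arises uniquely this way with $\omega(\mathbf{p}) = \prod_k \omega(\mathbf{p}^{(k)})$. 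Grouping by the marked subnetwork and using that $\omega(\mathbf{p})$ depends only on $\tilde N(\mathbf{p})$, I obtain
$$\Delta_{I_1, J_1}(X(N)) \Delta_{I_2, J_2}(X(N)) \Delta_{I_3, J_3}(X(N)) = \sum_{\mathbf{p} \in P_{\mathbf{I}, \mathbf{J}}(N)} \omega(\mathbf{p}) = \sum_{\tilde N < N} \omega(\tilde N) \, \bigl| P_{\mathbf{I}, \mathbf{J}}(\tilde N) \bigr|,$$
where $P_{\mathbf{I}, \mathbf{J}}(\tilde N)$ is the set of $\mathbf{p} \in P_{\mathbf{I}, \mathbf{J}}(N)$ with $\tilde N(\mathbf{p}) = \tilde N$.

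Next I would transform the right-hand side. Substituting the definition of $\Imm'_{D_i}(N)$ and interchanging summation,
$$\sum_i |L_{D_i, g}| \, \Imm'_{D_i}(N) = \sum_{\tilde N < N} \omega(\tilde N) \sum_i |L_{D_i, g}| \, c_{i, \tilde N}.$$
For the inner sum I would apply Theorem \ref{thm:ci} to the decomposition $e_{D(\tilde N)} = \sum_i c_{i, \tilde N} e_{D_i}$: for each $i$ with $|L_{D_i, g}| > 0$ it gives $|L_{D_i, g}| \, c_{i, \tilde N} = |L_{D(\tilde N), D_i, g}|$, while for $i$ with $|L_{D_i, g}| = 0$ both of these quantities vanish, since a type-$D_i$ labeling of $D(\tilde N)$ with boundary $g$ would reduce to a consistent labeling of $D_i$ with boundary $g$, of which there are none. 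Summing over $i$ and invoking the identity $\sum_{D_i} |L_{D(\tilde N), D_i, g}| = |L_{D(\tilde N), g}|$ from the proof of Theorem \ref{thm:tri}, the inner sum collapses to $|L_{D(\tilde N), g}|$, so the right-hand side equals $\sum_{\tilde N < N} \omega(\tilde N) \, |L_{D(\tilde N), g}|$.

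Comparing the two expansions, the theorem reduces to the purely combinatorial identity $\bigl| P_{\mathbf{I}, \mathbf{J}}(\tilde N) \bigr| = |L_{D(\tilde N), g}|$ for every $\tilde N < N$, and I expect this bijection to be the main obstacle. The guiding idea is that a family $\mathbf{p} \in P_{\mathbf{I}, \mathbf{J}}(\tilde N)$ is the same datum as a coloring of the strands of $\tilde N$ by the three colors $1,2,3$ — each path inheriting the color $k$ of the block $I_k$ containing its source — subject to color-conservation at every vertex, to the non-crossing of equally-colored strands, and to the boundary colors recorded by $(\mathbf{I}, \mathbf{J})$. Such a coloring translates into a consistent labeling of the uncrossed web $D(\tilde N)$ through a local rule reflecting the $\mathfrak{sl}_3$ duality $V^{*} \cong \wedge^2 V$: an edge traversed by a single color $i$ receives the label $i$, whereas an edge of multiplicity two, traversed by the colors $\{i,j\}$, is recorded by the dual symbol of the complementary color, which is exactly the source of the primed labels and of the rule $e_+ = i \Leftrightarrow e_- = i'$ in the definition of a consistent labeling. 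The crux is to verify, configuration by configuration in Figure \ref{fig:sp12}, that at each vertical uncrossing the admissible local colorings — necessarily by distinct colors, since equally-colored paths cannot cross — correspond bijectively to the consistent labelings of the resulting trivalent fragment, and that under this correspondence color-conservation and within-color non-crossing become precisely the distinctness condition at the trivalent vertices of $D(\tilde N)$. Since the boundary colors match $g$ by construction, assembling these local bijections over all edges and crossings of $\tilde N$ yields the global bijection, and the theorem follows.
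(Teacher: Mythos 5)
Your proposal is correct and follows essentially the same route as the paper: expand the product of minors via the Lindstr\"om lemma, identify path families in $P_{\mathbf{I},\mathbf{J}}(N)$ with consistent labelings of the uncrossed webs $D(\tilde N)$ by coloring each path according to its source block, and then use the identity $|L_{D(\tilde N),g}| = \sum_{D_i} c_{i,\tilde N}\,|L_{D_i,g}|$ coming from Theorem \ref{thm:ci} to rearrange the sum. The only cosmetic difference is that you match the two sides termwise over marked subnetworks and spell out the local $\wedge^2 V \cong V^*$ rule behind the bijection, whereas the paper states the bijection globally.
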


\begin{proof}
It is clear from the Lindstr\"om lemma that $$\Delta_{I_1, J_1}(X(N)) \Delta_{I_2, J_2}(X(N)) \Delta_{I_3, J_3}(X(N)) = \sum_{{\bf p} \in P_{\bf I, J}(N)} \omega({\bf p}).$$ Note that the sum on the right involves only families $\bf p$ with no four paths crossing in one point. Label each path with $k$-s and $k'$-s if it starts at $I_k$. Then the induced labeling of $D(\tilde N({\bf p}))$ is consistent labeling, and furthermore this map is a bijection between $\bigcup_{\tilde N < N} L_{D(\tilde N), g}$ and $P_{\bf I,J}(N)$. Thus $$\sum_{{\bf p} \in P_{\bf I, J}(N)} \omega({\bf p}) = \sum_{\tilde N < N} |L_{D(\tilde N), g}| \omega(\tilde N).$$ Recall from the proof of Theorem \ref{thm:ci} that $$|L_{D(\tilde N), g}| = \sum_{D_i} |L_{D(\tilde N), D_i, g}| = \sum_{D_i} c_{i, \tilde N} |L_{D_i, g}|.$$ Then $$\sum_{\tilde N < N} |L_{D(\tilde N), g}| \omega(\tilde N) = \sum_{\tilde N < N} (\sum_{D_i} c_{i, \tilde N} |L_{D_i, g}| \omega(\tilde N))$$ $$=\sum_{D_i} (|L_{D_i, g}| \sum_{\tilde N < N} c_{i, \tilde N} \omega(\tilde N)) = \sum_{D_i} |L_{D_i,g}| \Imm'_{D_i}(N).$$
\end{proof}

\begin{corollary}
We have $\Imm_D(X(N)) = \Imm'_D(N)$.
\end{corollary}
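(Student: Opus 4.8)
The plan is to set the two available expansions of the same product of three complementary minors against each other and then invert a linear system. Theorem~\ref{thm:tri}, applied to the matrix $X = X(N)$, gives
$$\Delta_{I_1,J_1}(X(N))\,\Delta_{I_2,J_2}(X(N))\,\Delta_{I_3,J_3}(X(N)) = \sum_i |L_{D_i,g}|\,\Imm_{D_i}(X(N)),$$
while the theorem just proved gives the same left-hand side as $\sum_i |L_{D_i,g}|\,\Imm'_{D_i}(N)$, with the \emph{same} coefficients $|L_{D_i,g}|$ and the \emph{same} boundary labeling $g$ attached to the partition $({\bf I},{\bf J})$. Subtracting, for every partition $({\bf I},{\bf J})$ with associated boundary labeling $g$ we obtain
$$\sum_i |L_{D_i,g}|\bigl(\Imm_{D_i}(X(N)) - \Imm'_{D_i}(N)\bigr) = 0.$$

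First I would observe that this is a homogeneous linear system in the unknowns $v_i := \Imm_{D_i}(X(N)) - \Imm'_{D_i}(N)$, one equation for each boundary labeling $g$, with coefficient matrix $A = (|L_{D_i,g}|)_{g,i}$. Both $\Imm_{D_i}(X(N))$ and $\Imm'_{D_i}(N)$ are polynomials in the edge weights with integer coefficients, so it is harmless to regard the weights as indeterminates and solve the system over $\mathbb Q$; the resulting polynomial identity then specializes to an arbitrary ground ring $R$.

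The crux is that $A$ has full column rank, and this is exactly the second assertion of Theorem~\ref{thm:tri}. Indeed, the rows of $A$ are the coefficients expressing the products of triples of complementary minors in the web immanant basis. Since those products span the whole space and the web immanants are a basis of it, the row space of $A$ is all of $\mathbb Q^{\,m}$, where $m$ is the number of irreducible webs; hence $\mathrm{rank}\,A = m$, the number of columns. Consequently $Av = 0$ forces $v = 0$, that is, $\Imm_D(X(N)) = \Imm'_D(N)$ for every irreducible web $D$. This is the same ``inverting'' maneuver already used to finish the theorem relating web and Temperley--Lieb immanants.

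The only genuine obstacle is justifying the full-column-rank claim rather than merely quoting it: one must be sure that enough partitions $({\bf I},{\bf J})$ occur to pin down all the $v_i$. This is guaranteed precisely because, by Theorem~\ref{thm:tri} together with \cite{DKR}, the products of triples of complementary minors span a space whose dimension equals the number of irreducible webs (the number of $(4,3,2,1)$-avoiding permutations), so the expansion map is surjective onto the basis coordinates and $A$ must have rank $m$. Once this is in hand the argument is purely linear-algebraic and the corollary follows immediately.
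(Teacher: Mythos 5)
Your argument is correct and is essentially the paper's: both expansions of the products of triples of complementary minors share the same coefficients $|L_{D_i,g}|$, and the basis assertion of Theorem~\ref{thm:tri} is what lets you invert. The only cosmetic difference is that the paper extracts a square invertible subsystem indexed by the standard bitableaux of \cite{DKR}, while you argue full column rank of the whole rectangular coefficient matrix; both hinge on the same dimension count.
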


\begin{proof}
The products of the complementary minors labeled by the {\it {standard bitableaux}} of \cite{DKR} with at most three columns form a linear basis, the {\it {standard basis}},  for the subspace of immanants generated by products of triples of complementary minors. On the other hand we know that the number of those is exactly the dimension of $TLM_n^3$, i.e., the number of irreducible webs in $\mathfrak M_n$. Therefore the transition matrix from the $\Imm_D$ to the standard basis is invertible. Then both the $\Imm_D$ and the $\Imm'_D$ are recovered via the same transition matrix from the standard basis, and thus they must coincide.
\end{proof}

Note that this provides an alternative proof of Theorem \ref{thm:nn} since by a result of Brenti \cite{Br} every totally nonnegative matrix can be represented by a planar weighted network with nonnegative weights. In fact we have implicitly used the result of Brenti in the original proof of Theorem \ref{thm:nn} as well, when we relied on Lemma \ref{lem:ste}.

\section{Concluding remarks} \label{sec:con}

In \cite{RS2} Rhoades and Skandera introduced a family of immanants called {\it {Kazhdan-Lusztig immanants}}, where the coefficients of monomials are given by evaluations of Kazhdan-Lusztig polynomials. Kazhdan-Lusztig immanants are labeled by permutations, and constitute a basis for the whole space of immanants. In \cite{RS2} it is shown, relying on the work of Fan and Green \cite{FG}, that Temperley-Lieb immanants coincide with the Kazhdan-Lusztig immanants for $(3,2,1)$-avoiding permutations. According to \cite[Theorem 2.4]{RS3} the Kazhdan-Lusztig immanants labeled by $(k, \ldots, 1)$-avoiding permutations constitute a basis for the vector space generated by products of $k$-tuples of complementary minors. Thus one is naturally led to wonder what is the relation between $A_2$-web immanants and $(4,3,2,1)$-avoiding Kazhdan-Lusztig immanants. This question might be related to the question addressed in \cite{KKh}.

A theme of Schur positivity appears in the study of immanants, cf. \cite{Ste2, H, RS2, LPP}. In the terminology of \cite{H, RS2} a {\it {generalized Jacobi-Trudi matrix}} corresponding to two partitions $\lambda, \mu$ is the matrix with entries $x_{i,j} = h_{\lambda_i-\mu_j}$, where the $h$ are the {\it {complete homogeneous symmetric functions}}, cf. \cite{St}. It was shown in \cite{RS2}, relying on a result of Haiman \cite{H}, that Kazhdan-Lusztig immanants of generalized Jacobi-Trudi matrices are nonnegative when expressed in the basis of {\it {Schur functions}}. One might wonder if web immanants have the same property. Note that if web immanants were shown to be nonnegative combinations of Kazhdan-Lusztig immanants, the Schur positivity would follow. 

It is natural to expect a generalization of the present results from $TLM_n^3$ to $TLM_n^k$ for any $k$. That would involve having a Kauffman diagram-like calculus for any $k$. One can however use the relations described in \cite{BK} to reverse engineer such a calculus. For that one would need to make a guess of what 
$A_k$-{\it {webs}} represent generators $e_j^{(i)}$ of $TLM_n^k$. The choice on Figure \ref{fig:sp4} seems to be natural, here there are $k-i-1$ edges in the middle. 

\begin{figure}[h!]
\begin{center}
\input{sp4.pstex_t}
\end{center}
\caption{}\label{fig:sp4}
\end{figure}

It is convenient to let such diagrams represent the rescaled generators $[i!]_q e_{j}^{(i)}$, where $[i!]_q = [1]_q [2]_q \dotsc [i]_q$. Given that, one can work out for example the conjecture for $TLM_n^4$ web rules to be as shown on Figure \ref{fig:sp5}. It seems that the crucial part of an argument would be verifying that the number of irreducible $A_k$-webs defined in this way is equal to the number of $(k, \ldots, 1)$-avoiding permutations. Once that is known, one could expect an argument similar to that of Theorem \ref{thm:rhom} to exist, implying linear independence of irreducible webs. 

\begin{figure}[h!]
\begin{center}
\input{sp5.pstex_t}
\end{center}
\caption{}\label{fig:sp5}
\end{figure}


\begin{thebibliography}{a-a-a-a-a-a-a}


\bibitem[BK]{BK} {\sc T.~Brzezi\'nski and J.~Katriel:}
Representation-theoretic derivation of the Temperley-Lieb-Martin algebras, {\sl J. Phys. A} \textbf{28} (1995), no. 18, 5305--5312.

\bibitem[Br]{Br} {\sc F.~Brenti:}
Combinatorics and total positivity,
{\sl   J. Combin. Theory Ser. A},
\textbf{71} (1995), no. 2, 175--218.

\bibitem[DKR]{DKR} {\sc J.~Desarmenien, J.~Kung, G.-C.~Rota:}
Invariant theory, Young bitableaux, and combinatorics,
{\sl Advances in Mathematics},
\textbf{27} (1978), no. 1, 63--92.

\bibitem[FG]{FG} {\sc K.~Fan and R.M.~Green:} Monomials and Temperley-Lieb algebras,
{\sl Journal of Algebra}, \textbf{190} (1997), 498--517.

\bibitem[FZ]{FZ} {\sc S.~Fomin, A.~Zelevinsky:}
Total positivity: tests and parametrizations,
{\sl   Math. Intelligencer},
\textbf{22} (2000), no. 1, 23--33.

\bibitem[H]{H} {\sc M.~Haiman:}  Hecke algebra characters and immanant conjectures, {\sl  J. Amer. Math. Soc.}, \textbf{6} (1993), 569--595.

\bibitem[K]{K} {\sc G.~Kuperberg:}
Spiders for rank $2$ Lie algebras, {\sl Comm. Math. Phys.} \textbf{180} (1996), no. 1, 109--151.

\bibitem[KKh]{KKh} {\sc M. Khovanov and G.~Kuperberg:}
Web bases for ${\rm sl}(3)$ are not dual canonical, {\sl Pacific J. Math.} \textbf{188} (1999), no. 1, 129--153.

\bibitem[LP]{LP} {\sc T.~Lam and P.~Pylyavskyy:} Temperley-Lieb pfaffinants and Schur $Q$-positivity conjectures, arXiv: {\tt math.CO/ 0612842}.

\bibitem[LPP]{LPP} {\sc T.~Lam, A.~Postnikov and P.~Pylyavskyy:}
Schur positivity and Schur log-concavity, {\sl Amer. J. Math}, to
appear; {\tt arXiv:math.CO/0502446}.

\bibitem[M]{M}{\sc P.~Martin:} {\sl Potts Models and Related Problems in Statistical Mechanics}, World Scientific, Singapore, 1991.

\bibitem[RS1]{RS1} {\sc B.~Rhoades and M.~Skandera:}
Temperley-Lieb immanants, {\sl Annals of Combinatorics
\bf 9} (2005), no.~4, 451--494.

\bibitem[RS2]{RS2} {\sc B.~Rhoades and M.~Skandera:}
Kazhdan-Lusztig immanants and products of matrix minors, {\sl
Journal of Algebra \bf 304}, (2006), 793--811.

\bibitem[RS3]{RS3} {\sc B.~Rhoades and M.~Skandera:}
Kazhdan-Lusztig immanants and products of matrix minors, II {\sl
Linear and Multilinear Algebra}, to appear.

\bibitem[St]{St}{\sc R.~Stanley:} {\sl Enumerative
Combinatorics, Vol 2}, Cambridge, 1999.

\bibitem[Ste1]{Ste1} {\sc J.~Stembridge:}
Immanants of totally positive matrices are nonnegative, {\sl Bull. London Math. Soc.
\bf 23} (1991), 422--428.

\bibitem[Ste2]{Ste2} {\sc J.~Stembridge:}
Some conjectures for immanants, {\sl Canad. J. Math.
\bf 44} (1992), no.~5, 1079--1099.

\bibitem[SW]{SW} {\sc A.~Sikora and B.~Westbury:}
Confluence theory for graphs, {\sl Algebraic \& Geometric Topology} \textbf{7} (2007), 439--478.

\bibitem[TL]{TL} {\sc H.N.V.~Temperley and E.H.~Lieb:}
 {\sl Proc. Roy. Soc. (London) \bf A322} (1971), no.~251.

\end{thebibliography}
\end{document}